\DeclareFontFamily{U}{mathb}{\hyphenchar\font45}
\DeclareFontShape{U}{mathb}{m}{n}{
      <5> <6> <7> <8> <9> <10> gen * mathb
      <10.95> mathb10 <12> <14.4> <17.28> <20.74> <24.88> mathb12
      }{}
\DeclareSymbolFont{mathb}{U}{mathb}{m}{n}
\DeclareMathSymbol{\bigast}{1}{mathb}{"06}
\tikzset{>=stealth}
\tikzset{link/.style={column sep=1.8cm,row sep=0.16cm}}
\renewcommand{\to}{\longrightarrow}
\newcommand{\rat}{\dashrightarrow}
\newcommand{\Xl}{\mathcal X} 
\newcommand{\Dl}{\mathcal D} 
\newcommand{\Hl}{\mathcal H} 
\newcommand{\Ql}{\mathcal Q} 
\newcommand{\Cl}{\mathcal{CB}} 
\newcommand{\gammao}{\gamma_{\circ}}
\newcommand{\gammas}{\gamma_*}
\newcommand{\I}{\textup{1}\xspace}
\newcommand{\II}{\textup{2}\xspace}
\newcommand{\III}{\textup{3}\xspace}
\newcommand{\IV}{\textup{4}\xspace}
\newcommand{\V}{\textup{5}\xspace}
\newcommand{\VI}{\textup{6}\xspace}
\newcommand{\Ia}{\textup{I}\xspace}
\newcommand{\IIa}{\textup{II}\xspace}
\newcommand{\IIIa}{\textup{III}\xspace}
\newcommand{\IVa}{\textup{IV}\xspace}
\newcommand{\Glo}{\mathcal G_\circ} 
\newcommand{\Gls}{\mathcal G_*} 
\newcommand{\Jlo}{\mathcal J_\circ} 
\newcommand{\Jls}{\mathcal J_*} 
\newcommand{\Jl}{\mathcal J}
\newcommand{\id}{\text{\rm id}}
\newcommand{\Z}{\ensuremath{\mathbb{Z}}}
\newcommand{\F}{\ensuremath{\mathbb{F}}}
\newcommand{\R}{\ensuremath{\mathbb{R}}}
\newcommand{\C}{\ensuremath{\mathbb{C}}}
\newcommand{\p}{\ensuremath{\mathbb{P}}}
\newcommand{\FF}{\ensuremath{\mathbb{F}}}
\newcommand{\J}{\ensuremath{\mathcal{J}}}
\renewcommand\k{\mathrm{k}}
\renewcommand\phi{\varphi}
\DeclareMathOperator{\Aut}{Aut}
\DeclareMathOperator{\Autp}{Aut_\R(\p^2)}
\DeclareMathOperator{\Birp}{Bir_\R(\p^2)}
\DeclareMathOperator{\Bir}{Bir}
\DeclareMathOperator{\Id}{id}
\DeclareMathOperator{\Sar}{BirMori(\p^2)}
\def\dashmapsto{\mapstochar\dashrightarrow}
\newcommand{\pt}{\text{pt}}
\renewcommand{\phi}{\varphi}
\newtheorem{theorem}{Theorem}[section]
\newtheorem*{thm*}{Main Theorem}
\newtheorem{corollary}[theorem]{Corollary}
\newtheorem{lemma}[theorem]{Lemma}
\newtheorem{proposition}[theorem]{Proposition}
\theoremstyle{definition}
\newtheorem{definition}[theorem]{Definition}
\newtheorem{remark}[theorem]{Remark}
\newtheorem{example}[theorem]{Example}
\title{The real plane Cremona group is an amalgamated product}
\author{Susanna Zimmermann}
\subjclass[2010]{14E07; 20F05; 14P99}
\thanks{During this work, the author was supported by the Swiss National Science Foundation, by Projet PEPS 2018 JC/JC and by ANR Project FIBALGA ANR-18-CE40-0003-01.}
\address{Susanna Zimmermann\\ Laboratoire angevin de recherche en mathématiques (LAREMA)\\ CNRS\\
Université d'Angers\\ 49045 Angers Cedex 1\\ France}
\email{susanna.zimmermann@univ-angers.fr}
\begin{document}
\maketitle
\thispagestyle{empty}

\begin{abstract}
We show that the real Cremona group of the plane is a non-trivial amalgam of two groups amalgamated along their intersection and give an alternative proof of its abelianisation.
\end{abstract}

\section{Introduction}

The plane Cremona group is the group $\Bir_\k(\p^2)$ of birational transformations of $\p^2$ defined over a field $\k$. 
For algebraically closed fields $\k$, the Noether-Castelnuovo theorem \cite{Cas} shows that $\Bir_\k(\p^2)$ is generated by $\Aut_\k(\p^2)$ and the quadratic map $[x:y:z]\dashmapsto[yz:xz:xy]$. It implies that the normal subgroup generated by $\Aut_\k(\p^2)$ is equal to $\Bir_\k(\p^2)$.
Furthermore, \cite[Appendix by {\sc Cornulier}]{C13} shows that $\Bir_\k(\p^2)$ is not isomorphic to a non-trivial amalgam of two groups. However, it is isomorphic to a non-trivial amalgam modulo one simple relation \cite{B12,L10,I84}, and it is isomorphic to a generalised amalgamated product of three groups, amalgamated along all pairwise intersections \cite{W92}. 
For $\k=\R$, the group $\Birp$ is generated by $\Autp$ and the two subgroups
\begin{align*}
&\Jls= \{f\in\Birp\mid f\ \text{preserves the pencil of lines through}\ [0:0:1]\ \}\\
&\Jlo=\{f\in\Birp\mid f\ \text{preserves the pencil of conics through}\ p_1,\bar{p}_1,p_2,\bar{p}_2\ \}
\end{align*}
where $p_1=[1:i:0],p_2=[0:1:i]$ \cite[Theorem 1.1]{BM14}.
Over $\C$, the analogon of $\Jlo$ is conjugate to $\Jls$ since a pencil of conics through four points in $\p^2$ in general position can be sent over $\C$ onto a pencil of lines through one point.

We define 
$\Glo\subset \Birp$ to be the subgroup generated by $\Autp$ and $\Jlo$, and by
$\Gls\subset\Birp$ the subgroup generated by $\Autp$ and $\Jls$.
Then $\Birp$ is generated by $\Gls\cup \Glo$, and the intersection $\Gls\cap \Glo$ contains the subgroup $\Hl$ generated by $\Autp$ and the involution $[x:y:z]\dashmapsto[xz:yz:x^2+y^2]$, which is contained $\Jlo\cap\Jls$.

\begin{theorem}\label{main thm}
We have $\Gls\cap \Glo=\Hl$, which is a proper subgroup of $\Glo$ and $\Gls$ and
\[\Birp\simeq\Glo\bigast_\Hl \Gls.\]
Moreover, both $\Gls$ and $\Glo$ have uncountable index in $\Birp$.
\end{theorem}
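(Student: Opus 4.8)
The plan is to identify $\Birp$ with the abstract amalgam by way of its universal property, and to obtain the remaining assertions from the same analysis. First I would assemble the inputs that are already available. By \cite[Theorem 1.1]{BM14} the group $\Birp$ is generated by $\Autp$, $\Jls$ and $\Jlo$, hence by $\Gls\cup\Glo$; and by hypothesis $\Hl\subseteq\Gls\cap\Glo$. The inclusions $\Glo\hookrightarrow\Birp$ and $\Gls\hookrightarrow\Birp$ therefore agree on $\Hl$, so they induce a surjective homomorphism
\[
\pi\colon\ \Glo\bigast_\Hl\Gls\ \to\ \Birp .
\]
The whole theorem then reduces to two points: that $\pi$ is injective, and that $\Gls\cap\Glo$ is no larger than $\Hl$; the index statement will follow formally from the second.

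By the normal form theorem for amalgamated products, injectivity of $\pi$ is equivalent to the assertion that a \emph{reduced} word $f=h_1\cdots h_n$ with $n\geq 1$, whose factors lie alternately in $\Glo\setminus\Hl$ and $\Gls\setminus\Hl$, is never the identity in $\Birp$. To prove this I would use the Sarkisov program as an organising tool: resolve each factor into a chain of elementary links — those adapted to the pencil of lines for the $\Gls$-factors, those adapted to the pencil of conics for the $\Glo$-factors — and attach to an element of $\Birp$ a complexity measured by the configuration of its (possibly non-real) base points, refined by the degree. The point I would establish is that this complexity is strictly increased by appending a genuine letter, so that a reduced word of positive length cannot represent $\id$.

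The decisive input, and what I expect to be the main obstacle, is the behaviour at a junction $h_ih_{i+1}$: a priori base points coming from the two adjacent links could cancel and lower the complexity, shortening the word. Ruling this out amounts to the classification of the relations among the Sarkisov links, and the heart of the matter is to show that every such relation lives entirely inside $\Glo$, entirely inside $\Gls$, or is merely an identification within $\Hl$ — so that none of them mixes a genuine $\Glo$-letter with a genuine $\Gls$-letter. Here the real structure is essential: the two pencils cannot be interchanged over $\R$, whereas over $\C$ the conic pencil is conjugate to the line pencil and this rigidity collapses, which is precisely why $\Bir_\C(\p^2)$ is not an amalgam of this shape. The same rigidity gives $\Gls\cap\Glo=\Hl$: an $f$ in both factors admits a factorisation through line-adapted links and one through conic-adapted links, and comparing them forces the link part to be empty, so that $f$ is generated by $\Autp$ and the involution, i.e. $f\in\Hl$.

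Finally I would deduce the index statement from the intersection. Since $\Hl\subseteq\Glo$ the assignment $s\Hl\mapsto s\Glo$ is a well-defined map $\Gls/\Hl\to\Birp/\Glo$, and it is injective because $s^{-1}s'\in\Glo$ together with $s^{-1}s'\in\Gls$ forces $s^{-1}s'\in\Gls\cap\Glo=\Hl$; hence $[\Birp:\Glo]\geq[\Gls:\Hl]$, and symmetrically $[\Birp:\Gls]\geq[\Glo:\Hl]$. It then remains to produce uncountably many elements of $\Jls$, respectively $\Jlo$, lying in pairwise distinct cosets of $\Hl$ — for instance an uncountable family of de Jonquières maps that are pairwise inequivalent modulo $\Hl$ — which shows that both indices are uncountable and, in particular, that $\Hl$ is a proper subgroup of each of $\Glo$ and $\Gls$.
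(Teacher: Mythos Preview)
Your high-level strategy --- define $\pi\colon\Glo\ast_\Hl\Gls\to\Birp$ via the universal property and prove injectivity by showing that every relation among Sarkisov links lives inside one of the factors --- is exactly the skeleton of the paper's proof. But the proposal stops precisely where the work begins.

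\textbf{The main gap.} You identify correctly that ``the heart of the matter is to show that every such relation lives entirely inside $\Glo$, entirely inside $\Gls$, or is merely an identification within $\Hl$'', and then you do not prove it. Your suggested mechanism --- a complexity on base-point configurations that is strictly increased by a genuine letter --- is not defined, and it is far from obvious that any such function exists; indeed this is the whole difficulty. The paper solves this by building the simply connected square complex $\Xl$ and classifying its elementary discs (Proposition~\ref{lem:disc}): every elementary relation corresponds to a disc of type $\Dl_1,\dots,\Dl_6$, and inspection shows each one (possibly after conjugation by a fixed link to $\p^2$) is a relation in $\Glo$, in $\Gls$, or in $\Glo\cap\Gls$. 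Without something playing the role of that classification, your normal-form argument has no engine.

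\textbf{The intersection argument is incorrect as stated.} You write that comparing a line-adapted and a conic-adapted factorisation of $f\in\Gls\cap\Glo$ ``forces the link part to be empty''. That cannot be right: nontrivial elements of $\Hl$ (e.g.\ $\sigma$ itself) have genuine Sarkisov decompositions on both sides, so the link part is certainly not forced to vanish. What the comparison actually forces --- and what the paper proves using simple connectedness of $\Xl$ together with Lemma~\ref{lem:connected_components}(\ref{cc5})--(\ref{cc6}) --- is that the loop formed by the two factorisations bounds a disc covered by elementary discs of type $\Dl_1$, and walking along their boundaries gives a decomposition of $f$ into the generators of $\Hl$.

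\textbf{The index argument.} Your reduction $[\Birp:\Glo]\geq[\Gls:\Hl]$ (and symmetrically) is valid once $\Gls\cap\Glo=\Hl$ is established, but you still owe an uncountable family in $\Jls$ (resp.\ $\Jlo$) pairwise inequivalent modulo $\Hl$, and ``for instance an uncountable family of de Jonqui\`eres maps'' is not yet a proof. The paper proceeds differently and more directly: for $\Glo$ it exhibits an explicit injection of an uncountable set into $\Birp/\Glo$ (Lemma~\ref{lem:Gs_uncountable_index}); for $\Gls$ it constructs the surjection $\Psi\colon\Birp\to\bigoplus_{(0,1]}\Z/2\Z$ with $\Gls\subset\ker\Psi$ (Proposition~\ref{prop:quotient}), which simultaneously gives uncountable index and the properness $\Hl\subsetneq\Glo$ via standard quintic transformations.
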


The action of $\Birp$ on the Bass-Serre tree associated to the amalgamated product yields the following:

\begin{corollary}\label{cor:tree}
Any of algebraic subgroup of $\Birp$ is conjugate to a subgroup of $\Gls$ or of $\Glo$.
\end{corollary}

For the finite subgroups of odd order Corollary~\ref{cor:tree} can also be verified by checking their classification in \cite{Y16}. 
An earlier version of this article used an explicit presentation of $\Birp$ and of $\Glo$ in terms of generators and generating relations, the first of which is proven in \cite{Z17} and the second was proven analogously in the earlier version of this article. 
The present version does not use either presentation. Instead, we look at the groupoid of birational maps between rational real Mori fibre spaces of dimension $2$. It contains $\Bir_\R(\p^2)$ as subgroupoid, is generated by Sarkisov links and isomorphisms and the elementary relations are a set of generating relations \cite{LZ17}, see also Theorem~\ref{thm:sarkisov}. This information is encoded in a square complex on which $\Birp$ acts \cite{LZ17}, see also \S~\ref{sec:squares}. 
This allows us to moreover provide a new proof of the abelianisation theorem of $\Birp$ given in \cite[Theorem 1.1(1)\&(3)]{Z17}: 

\begin{theorem}\label{thm:abel}
There is a surjective homomorphism of groups
$$\Phi\colon\Birp\to\bigoplus_{(0,1]}\Z/2\Z$$
such that its restriction to $\Jlo$ is surjective and  $\Gls\subset\ker(\Phi)=[\Birp,\Birp]$, where the right hand side is also equal to the normal subgroup generated by $\Autp$.
\end{theorem}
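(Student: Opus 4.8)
The plan is to build $\Phi$ directly from the presentation of the Sarkisov groupoid $\Sar$ supplied by Theorem~\ref{thm:sarkisov}. Since $\Sar$ is generated by Sarkisov links and the elementary relations form a complete set of defining relations, a homomorphism of groupoids from $\Sar$ to an abelian group $A$ (viewed as a groupoid with one object) is nothing more than a function $\psi$ assigning to each Sarkisov link $\ell$ an element $\psi(\ell)\in A$, subject only to the conditions $\psi(\ell^{-1})=-\psi(\ell)$ and that $\psi$ sum to $0$ along every elementary relation. Taking $A=\bigoplus_{(0,1]}\Z/2\Z$, the first condition is automatic because $A$ is $2$-torsion, so the whole construction reduces to specifying $\psi$ on links and checking the elementary relations. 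Restricting the resulting groupoid homomorphism to the vertex group $\Sar(\p^2,\p^2)=\Birp$ of loops $\p^2\rat\p^2$ then produces the group homomorphism $\Phi\colon\Birp\to A$.

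First I would fix the assignment $\psi$. Running through the classification of Sarkisov links between rational real Mori fibre surfaces, the only links I send to a non-zero element are the conic-bundle elementary transformations centred at a pair $\{p,\bar p\}$ of conjugate non-real points: to such a link I attach the generator $e_t\in A$ in the coordinate indexed by the real invariant $t\in(0,1]$ recording the position of $\{p,\bar p\}$ on its fibre up to the relevant automorphisms (a suitably normalised coordinate or cross-ratio). Every remaining link — links between del Pezzo fibrations, links centred at real points, and the isomorphisms coming from $\Autp$ — is sent to $0$. The delicate point already here is to pin down the normalisation of $t$ so that it is an honest invariant of the link, independent of the chosen model.

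The crux, and the step I expect to be the main obstacle, is verifying that $\psi$ sums to zero along every elementary relation. These relations are governed by the rank-jumps to Mori fibre spaces of Picard rank $3$, i.e. by the $2$-cells of the square complex of \S\ref{sec:squares}, and each is a finite cyclic word in Sarkisov links. I would first cut the list down to a few types by symmetry, immediately discard every relation all of whose links are assigned $0$, and then for each surviving type track the invariant $t$ around the cycle. In each case the conjugate pairs that are blown up and down must reappear with matching parameter, so their contributions cancel in $\Z/2\Z$; making this cancellation work uniformly is exactly where the normalisation of $t$ chosen above must be compatible with the geometry of the relations, and this bookkeeping is the technical heart of the argument.

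It then remains to read off the four asserted properties. Surjectivity of $\Phi$, and of its restriction to $\Jlo$, I would obtain simultaneously by exhibiting, for each $t\in(0,1]$, an explicit de Jonquières map in $\Jlo$ (preserving the pencil of conics through $p_1,\bar p_1,p_2,\bar p_2$) that decomposes into links realising exactly $e_t$; as $\Jlo\subset\Birp$ this settles both surjectivity statements. The inclusion $\Gls\subseteq\ker\Phi$ follows because $\Gls$ is generated by $\Autp$ and $\Jls$, and every such generator decomposes into links to which $\psi$ assigns $0$ (the isomorphisms from $\Autp$ and the Jonquières links centred at the real point $[1:0:0]$). Finally, for $\ker\Phi=[\Birp,\Birp]$, one inclusion is free since $A$ is abelian, and the reverse is where the construction \emph{is} the abelianisation: the completeness of the elementary relations in Theorem~\ref{thm:sarkisov} shows that the induced map $\Birp^{\mathrm{ab}}\to A$ is injective. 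To identify this kernel with the normal subgroup $N$ generated by $\Autp$, I would use that $\Autp=\PGL_3(\R)$ is perfect, so $\Autp=[\Autp,\Autp]\subseteq[\Birp,\Birp]$ and hence $N\subseteq[\Birp,\Birp]$; together with the fact that $\Birp/N$ is abelian (modulo $N$ the generators $\Jls$ and $\Jlo$ become abelian and commute), this yields $N=[\Birp,\Birp]=\ker\Phi$.
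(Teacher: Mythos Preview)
Your overall architecture---define $\Psi$ on Sarkisov links, check it vanishes on elementary relations, and restrict to $\Birp$---is exactly the paper's route via Proposition~\ref{prop:quotient}. But two steps do not go through as written.

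First, the assignment of nonzero values is too broad. You send to a nonzero element \emph{every} conic-bundle link centred at a non-real pair, but then claim $\Gls\subset\ker\Phi$ because elements of $\Jls$ decompose into ``Jonqui\`eres links centred at the real point $[1:0:0]$''. This conflates the de Jonqui\`eres map with its Sarkisov factorisation: an element of $\Jls$ is conjugate to a composition of type~II links between Hirzebruch surfaces (Proposition~\ref{prop:relative_sarkisov}), and those links may very well blow up pairs of non-real conjugate points on $\F_n$. With your assignment such links receive a nonzero value, so $\Gls$ is not obviously in the kernel. The paper avoids this by declaring $\Psi(\phi)\neq 0$ \emph{only} for type~II links $\Cl_6\dashrightarrow\Cl_6$ blowing up a non-real pair; all links on Hirzebruch surfaces are sent to $0$, which is what makes $\Gls\subset\ker\Psi$ immediate. (Incidentally, the invariant in Definition--Lemma~\ref{deflem} records the image in $\p^1$ of the \emph{fibre} containing the blown-up pair, not the position of the pair on its fibre.)

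Second, and more seriously, your argument for $\ker\Phi=[\Birp,\Birp]$ does not work. You assert that ``the completeness of the elementary relations in Theorem~\ref{thm:sarkisov} shows that the induced map $\Birp^{\mathrm{ab}}\to A$ is injective''. It does not: Theorem~\ref{thm:sarkisov} gives a presentation of the groupoid $\Sar$, and the fact that your $\psi$ respects the defining relations only tells you that $\Psi$ is a well-defined homomorphism, not that it computes the abelianisation. You have deliberately sent many links to $0$, so you are certainly factoring through a proper quotient of $\Sar^{\mathrm{ab}}$; showing that nothing further is lost when you restrict to the vertex group $\Birp$ is precisely the content that needs proof. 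The same objection applies to your claim that $\Birp/N$ is abelian: saying ``modulo $N$ the generators $\Jls$ and $\Jlo$ become abelian and commute'' is an assertion, not an argument, and it is far from clear why killing $\Autp$ normally should abelianise $\Jlo$. The paper does not attempt either computation here; it constructs $\Psi$ and verifies $\Gls\subset\ker\Psi$ and surjectivity on $\Jlo$, then cites \cite[\S6]{Z17} for the identification of $\ker\Psi$ with $[\Birp,\Birp]$ and with the normal closure of $\Autp$. That computation requires separate work with the structure of $\Jlo$ and the generating relations, which your sketch does not supply.
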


The proof of the main theorems rely on the description from \cite{LZ17} of elementary relations among so-called Sarkisov links, into which any real birational map of $\p^2$ decomposes \cite{Isk96}. In higher dimension over $\C$, the decomposition result is due to \cite{HMcK} (and \cite{Corti} for dimension $3$), and the description of elementary relations is generalised in \cite{BLZ}, where the authors moreover deduce that for $n\geq3$, the group $\Bir(\p^n_{\C})$ is a non-trivial amalgamated product of uncountably many factors along their common intersection. 
\bigskip

\noindent{\sc Acknowledgement:} I would like thank Anne Lonjou for asking me whether the real plane Cremona group is isomorphic to a generalised amalgamated product of several groups, and the interesting discussions that followed. 
I would also like to thank St\'ephane Lamy for discussions on the square complex, and
J\'er\'emy Blanc, Yves de Cornulier for helpful remarks, questions and discussions, and the referee for the very useful comments and suggestions.


\section{A square complex associated to the Cremona group} \label{sec:squares}

In this section we recall the square complex constructed in \cite{LZ17}. 

By a surface $S$, we mean a smooth projective surface defined over $\R$, and by $S_\C$ the same surface but defined over $\C$. We define the N\'eron-Severi space $N^1(S_\C)$ as the space of $\R$-divisors $N^1(S_\C):=\mathrm{Div}(S_\C)\otimes_{\Z}\R/\equiv$, where $\equiv$ is the numerical equivalence of divisors. The Galois group $\mathrm{Gal}(\C/\R)\simeq\Z/2\Z$ acts on $N^1(S_\C)$ and we denote by $N^1(S)$ the subspace of the invariant classes. Since we only consider surfaces with $S(\R)\neq\emptyset$ and $\C[S_\C]^*=\C^*$, $N^1(S)$ is also the space of classes of divisors defined over $\R$ (see for instance \cite[Lemma 6.3(iii)]{Sansuc}). The dimension of $N^1(S)$ is called {\em Picard rank} of $S$ and is denoted by $\rho(S)$. 
We can identify $N^1(S)$ with the space $N_1(S)$ of $1$-cycles defined over $\R$. 
For a surjective morphism $\pi\colon S\to B$ defined over $\R$, we denote by $N_1(S/B)\subset N_1(S)$ the subspace generated by curves contracted by $\pi$, and by $N^1(S/B)$ the quotient of $N^1(S)$ by the orthogonal of $N_1(S/B)$. We call $\rho(S/B)=\dim N^1(S/B)$ the {\em relative Picard} rank of $S$ over $B$.

If not stated otherwise, all morphisms are defined over $\R$, while curves and points contained in a real surface will be geometric curves and points, i.e. they are not necessarily defined over $\R$, but its $\mathrm{Gal}(\C/\R)$-orbit is.

\subsection{Rank $r$ fibrations and a square complex}\label{subsec:full complex}

\begin{definition}
Let $S$ be a smooth projective real surface, $B$ a point or a curve and $r \ge 1$ an integer. 
We say that a surjective morphism $\pi \colon S \rightarrow B$ with connected fibres is a \textit{rank $r$ fibration} if $\rho(S/B)=r$ and the anticanonical divisor $-K_S$ is $\pi$-ample. 
\end{definition}

The last condition means that for any curve $C$ contracted to a point by $\pi$, we have $K_S \cdot C < 0$.
The condition on the Picard number is that $\rho(S) = r$ if $B$ is a point, and $\rho(S) = r+1$ if $B=\p^1$. We may write $S/B$ instead of $\pi\colon S\to B$.

An isomorphism between two fibrations $S/B$ and $S'/B'$ is an isomorphism $S \stackrel{\simeq}{\rightarrow} S'$ such that there exists an isomorphism on the bases that makes the following diagram commute: 
\[
\begin{tikzpicture}[baseline= (a).base]
\node[scale=1](a) at (0,0){
\begin{tikzcd}
S\ar[r,"\simeq"] \ar[d,swap,"\pi"] & S'\ar[d,"\pi'"]\\
B\ar[r,"\simeq"] & B'
\end{tikzcd}
};
\end{tikzpicture}
\] 
In particular, $S/B$ and $S'/B'$ are fibrations of the same rank. 

The definition of a rank $r$ fibration puts together several
notions. 
If $B$ is a point, then $S$ is a real del Pezzo surface of Picard rank $r$.
If $B$ is a curve, then $S$ is a conic bundle of relative Picard rank $r$: a general fiber is isomorphic to a smooth plane real conic, and any singular fiber is the union of two $(-1)$-curves secant at one point. 
Remark also that being a rank 1 fibration is equivalent to being a (smooth) Mori fibre space of dimension $2$.

\begin{lemma}\label{lem:Mfs}
Let $S/B$ be a rank $r$ fibration and assume that $S$ is rational. 
\begin{itemize} 
\item If $r=1$, then $S/B$ is isomorphic to one of the following:
\begin{enumerate}
	\item $\p^2/\pt$,
	\item $\Ql=\{[w:x:y:z]\in\p^3\mid w^2=x^2+y^2+z^2\}/\pt$,
	\item $\F_0=\p^1\times\p^1/\p^1$ (the map is the second or first projection),
	\item $\F_n=\{([x:y:z],[u:v])\in\p^2\times\p^1\mid yv^n=zu^n\}/\p^1$, $n>0$,
	\item $\Cl_6=\{([w:x:y:z],[u:v])\in\Ql\times\p^1\mid wv=zu\}/\p^1$,
\end{enumerate}
\item If $r=2$, then $S/B$ is isomorphic to $\F_0/\pt$ or to the blow-up of a rank $1$ fibration in a real point of a pair of non-real conjugate points. 
\end{itemize}
\end{lemma}
\begin{proof}
The first statement is {\cite[Proposition 2.15]{BM14}}. Suppose that $S/B$ is a rank $2$ fibration. 
As $\rho(S/B)=2$, we can run the $\mathrm{Gal}(\C/\R)$-invariant two rays-game over $B$; there exist exactly two morphisms $\pi_i\colon S\to S_i$ with connected fibres and $\rho(S/S_i)=1$, $i=1,2$, such that $\pi$ factors through each $\pi_i$. 
\[
\begin{tikzpicture}[baseline= (a).base]
\node[scale=.75](a) at (0,0){
\begin{tikzcd}
&S\ar[dl,swap,"\pi_1"]\ar[dr,"\pi_2"]\ar[dd,"\pi"]&\\
S_1\ar[dr]&&S_2\ar[dl]\\
&B&
\end{tikzcd}
};
\end{tikzpicture}
\] 
If $S_1$ and $S_2$ are both curves, then $S_1\simeq S_2\simeq\p^1$ as $S$ is rational, and $S/S_1$ and $S/S_2$ are rank $1$ fibrations. From the classification of rank $1$ fibrations, it follows that $S/B\simeq\F_0/\pt$. 
Else, at least one of the $S_i$ is a surface, say $S_1$, and $\pi_1$ is a birational morphism.
If $C$ is a curve contracted by $S_1/B$, then both the exceptional divisor of $\pi_1$ and the strict transform $\tilde{C}$ of $C$ are contracted by $\pi$, hence $K_{S_1}\cdot C=K_S\cdot\pi^*(C)<0$. 
In particular, $S_1/B$ is a rank $1$ fibration. 
\end{proof}

For a rational surface $S$, we call \textit{marking} on a rank $r$ fibration $S/B$
a choice of a birational map $\phi\colon S \dashrightarrow \p^2$.
We say that two marked fibrations $\phi \colon S/B \rat \p^2$ and $\phi' \colon
S'/B' \rat \p^2$ are \textit{equivalent} if $\phi'^{-1} \circ \phi \colon S/B \rightarrow S'/B'$
is an isomorphism of fibrations.
We denote by $(S/B, \phi)$ an equivalence class under this relation.

If $S'/B'$ and $S/B$ are marked fibrations of respective rank $r'$ and $r$, we say that $S'/B'$ \textit{factorizes} through $S/B$ if the birational map $S' \rightarrow S$ induced by the markings is a morphism, and moreover there exists a (uniquely defined) morphism $B \rightarrow B'$ such that the following diagram commutes:
\[
\begin{tikzpicture}[baseline= (a).base]
\node[scale=1](a) at (0,0){
\begin{tikzcd}
S' \ar[rrr, "\pi'"] \ar[rd]&&&B'\\
&S\ar[r,"\pi"]& B\ar[ur]&
\end{tikzcd}
};
\end{tikzpicture}
\] 
In fact if $B' = \pt$ the last condition is empty, and if $B' \simeq \p^1$ it means that $S' \rightarrow S$ is a morphism of fibration over a common basis $\p^1$.
Note that $r'\geq r$. 


We define a 2-dimensional complex $\Xl$ as follows.
Vertices are equivalence classes of marked rank $r$ fibrations, with
$3 \ge r\ge 1$. 
We put an oriented edge from $(S'/B',\phi')$ to $(S/B,\phi)$ if $S'/B'$ factorizes through $S/B$.
If $r' > r$ are the respective ranks of $S'/B'$ and $S/B$, we say that the edge has type $r',r$.
For each triplets of pairwise linked vertices $(S''/B'',\phi'')$, $(S'/B', \phi')$, $(S/B, \phi)$ where $S''/B''$ (resp. $S'/B'$, $S/B$) is a rank $3$ (resp. $2$, $1$) fibration, we glue a triangle. 
In this way we obtain a 2-dimensional simplicial complex $\Xl$. 

\begin{lemma}{\cite[Lemma 2.3]{LZ17}} \label{lem:2 triangles}
For each edge in $\Xl$ of type $3,1$, there exist exactly two triangles that admit this edge as a side.  
\end{lemma}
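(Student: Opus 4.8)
The plan is to prove Lemma~\ref{lem:2 triangles} by a local analysis of the factorizations sitting over the given edge of type $3,1$. Let me sketch my approach.

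The plan is to translate the combinatorial statement into relative Mori theory on $S''$ and to count intermediate rank $2$ fibrations using the cone of curves. An edge of type $3,1$ is by definition a birational morphism $\eta\colon S''\to S$ together with a morphism $B\to B''$, and a triangle carrying this edge as its $3,1$-side is exactly a rank $2$ fibration $S'/B'$ with $S''\to S'\to S$ factoring $\eta$ and with compatible base maps $B\to B'\to B''$; so I must show there are exactly two such $S'$. Since $S''/B''$ has rank $3$ we have $\rho(S''/B'')=3$ and $-K_{S''}$ is $\pi''$-ample, so by the relative cone theorem $\NEbar(S''/B'')$ is a strictly convex rational polyhedral cone of dimension $3$, all of whose extremal rays are $K_{S''}$-negative. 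Every face $\Phi$ induces a contraction of $S''$ onto a smaller rank fibration, with relative Picard number dropping by $\dim\Phi$, and because the contracted rays are $K_{S''}$-negative the relative anticanonical class stays ample, so each contraction is again a rank fibration. This is the toolkit I would use throughout.

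Next I would set up the correspondence between intermediate $S'$ and faces. If $S''\to S'\to S$, then every curve contracted by $S''\to S'$ is also contracted by $\eta$, so the face $G$ contracted by $S''\to S'$ is a subface of the face $\Phi$ contracted by $\eta$; conversely a subface $G\subseteq\Phi$ together with a choice of base $B'$ turning the contraction into a rank $2$ fibration produces a candidate triangle. A short analysis of the admissible base pairs is needed first: writing $\rho(\pt)=0$ and $\rho(\p^1)=1$, commutativity of the factorization diagram forces $\pi''=\beta\circ\pi\circ\eta$, which rules out $B''=\p^1,\ B=\pt$ (it would make $\pi''$ constant) and leaves only the cases $B=B''$ and $B''=\pt,\ B=\p^1$. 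In the first case $\dim\Phi=\rho(S'')-\rho(S)=2$, while in the second $\eta$ drops the relative Picard number by one, so $\Phi$ is a single ray.

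I would then count in the two cases. When $B=B''$ the face $\Phi$ is a $2$-dimensional strictly convex cone, hence has exactly two extremal rays $R_1,R_2$; contracting $R_i$ gives a rank $2$ fibration $S'_i$ over $B''$ (with $-K_{S'_i}$ relatively ample, as $R_i$ is $K$-negative), and the complementary contraction $S'_i\to S$ realises the remaining side of the triangle. The two improper subfaces $\{0\}$ and $\Phi$ are excluded because the corresponding surface ($S''$, resp.\ $S$) admits no rank $2$ structure compatible with the base morphisms $B\to B'\to B''$: for $\{0\}$ either the Picard number is wrong or the base map $B\to B'$ clashes with the fibration of $S'$, and likewise for $\Phi$. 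This yields exactly two triangles. When $B''=\pt$ and $B=\p^1$, the ray $\Phi$ has only the two subfaces $\{0\}$ and $\Phi$ itself, and here both are admissible, precisely because $B=\p^1$ makes the base map $B\to B'$ harmless: the subface $\{0\}$ gives $S'=(S'',\p^1)$, the surface $S''$ with the conic bundle structure pulled back from $S\to\p^1$ (here $-K_{S''}$ is even ample), and $\Phi$ gives $S'=(S,\pt)$, the surface $S$ viewed as a del Pezzo surface of Picard rank $2$. That $S$ is indeed del Pezzo follows because it is a blow-down of the del Pezzo surface $S''$: a hypothetical $(-2)$-curve on $S$ would pull back to a curve of self-intersection $\le -2$ on $S''$, contradicting $-K_{S''}$ ample. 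Again exactly two.

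The main obstacle, and the part requiring the most care, is the bookkeeping of fibration structures versus underlying surfaces: in the mixed-base case the two edges $S''\to S'$ and $S'\to S$ are isomorphisms of surfaces that are nonetheless genuine edges because the bases differ, and one must verify in each situation that the candidate really is a rank $2$ fibration (smoothness, relative $-K$-ampleness, correct relative Picard number) and that the two resulting \emph{marked} fibrations are inequivalent, so that the two triangles are genuinely distinct. Completeness---that no further $S'$ exists---comes from the subface correspondence together with the base-compatibility constraints, which is exactly why the admissible-base analysis must be carried out before the count.
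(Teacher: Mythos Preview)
The paper does not supply its own proof of this lemma; it merely quotes it from \cite[Lemma~2.3]{LZ17}. So there is nothing in the present paper to compare your argument against.

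That said, your approach via the relative cone theorem is the natural and correct one, and it is essentially the argument one finds in \cite{LZ17}. Since $-K_{S''}$ is $\pi''$-ample, the relative Mori cone $\NEbar(S''/B'')$ is indeed a strictly convex rational polyhedral cone whose faces correspond bijectively to intermediate contractions, and your case split on the pair $(B'',B)$ is exactly the right organising principle. The two cases ($B=B''$, where the relevant $2$-face has two extremal rays; and $B''=\pt$, $B=\p^1$, where the two triangles come from changing the base on either $S''$ or $S$) are handled correctly. Your verification that the image of a blow-down of a del Pezzo surface is again del Pezzo, and your base-compatibility bookkeeping ruling out spurious candidates, are the points that need care, and you have addressed them.

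One small remark: in the equal-base case you might make explicit that each extremal ray of the $2$-dimensional face $\Phi$ is spanned by a $(-1)$-curve (rather than a fibre class), so that its contraction is birational with smooth target; this follows immediately since a fibre-type ray would force $\eta$ to contract a covering family of curves, contradicting birationality. Otherwise the sketch is complete.
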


By gluing all the pairs of triangles along edges of type 3,1, and keeping only edges of types 
3,2 and 2,1, we obtain a square complex that we still denote $\Xl$.
When drawing subcomplexes of $\Xl$ we will often drop part of the
information which is clear by context, about the markings, the equivalence
classes and/or the fibration.
For instance $S/B$ must be understood as $(S/B, \phi)$ for an implicit
marking $\phi$ and $\p^2$ as $\p^2/\pt$.

\subsection{Sarkisov links and elementary relations}

In this section we recall from \cite{LZ17} that the complex $\Xl$ encodes the notion of
Sarkisov links (or {\em links} for short), and of elementary relation between them.

A {\it Sarkisov link} $f\colon S\rat S'$ between two rank $1$ fibrations $S/B$ and $S'/B'$ is one of the following birational maps:
\begin{figure}[ht]
\[
\begin{tikzpicture}[baseline= (a).base]
\node[scale=.75](a) at (0,0){
\begin{tikzcd}
&S'\ar[dr]&\\
S\ar[dr]\ar[ur,dashed,"f"]&&B'\ar[dl]\\
&\pt&
\end{tikzcd}
};
\end{tikzpicture}
\begin{tikzpicture}[baseline= (a).base]
\node[scale=.75](a) at (0,0){
\begin{tikzcd}
&S''\ar[dl,swap,"\pi"]\ar[dr,"\pi' "]&\\
S\ar[dr]\ar[rr,dashed,"f"]&& S'\ar[dl]\\
&B&
\end{tikzcd}
};
\end{tikzpicture}
\begin{tikzpicture}[baseline= (a).base]
\node[scale=.75](a) at (0,0){
\begin{tikzcd}
&S'\ar[dr]\ar[dl,swap,"f"]&\\
S\ar[dr]&&B'\ar[dl]\\
&\pt&
\end{tikzcd}
};
\end{tikzpicture}
\begin{tikzpicture}[baseline= (a).base]
\node[scale=.75](a) at (0,0){
\begin{tikzcd}
S\ar[rr,"f",equal]\ar[d]&&S\ar[d]\\
B\ar[dr]&& B'\ar[dl]\\
&\pt&
\end{tikzcd}
};
\end{tikzpicture}
\]
\caption{From left to right: Sarkisov link $f$ of type I, II, III and IV.} 
\label{fig:links}
\end{figure}
\begin{itemize}[wide]
\item Link of type \Ia: $B=\pt$, $B'=\p^1$ and $f\colon S\rat S'$ is a blow-up of a real point or a pair of non-real conjugate points.
\item Link of type \IIa: $B=B'$ and there exist two blow-ups of a real point or non-real conjugate points $\pi\colon S''\to S$ and $\pi'\colon S''\to S'$ over $B$ such that $f=\pi'\circ\pi^{-1}$.
\item A link of type \IIIa is the inverse of a link of type I, i.e. $B=\p^1$, $B'=\pt$ and $f\colon S\to S'$ is the contraction of a real $(-1)$-curve or a pair of non-real conjugate $(-1)$-curves.
\item Link of type \IVa: $S= S'$ and $B,B'$ curves and $f$ is the identity on $S$. If $S$ is rational, then, by Lemma~\ref{lem:Mfs}, $S\simeq\F_0$, $B\simeq B'\simeq\p^1$ and $S/B,S/B'$ are the projections on the first and second factor.
\end{itemize}

Let $(S/B, \phi)$, $({S}'/B', \phi')$ be two marked rank 1 fibrations.
The induced birational map $S \rat S'$ is a \textit{Sarkisov link}
if and only if there exists a marked rank 2 fibration $S''/B''$ that factorizes through both $S/B$ and $S'/B'$.
\[
\begin{tikzpicture}[baseline= (a).base]
\node[scale=1](a) at (0,0){
\begin{tikzcd}
&S''/B''\ar[dl]\ar[dr]\\
S/B && S'/B'
\end{tikzcd}
};
\end{tikzpicture}
\]
Indeed, for links of type \Ia and \IIIa we take $S''/B''=S'/\pt$, for links of type \IIa we take $S''/B''=S''/B$, and for links of type \IVa, we take $S''/B''=S/\pt$. 
Equivalently, the vertices corresponding to $S/B$ and $S'/B'$ are at distance 2
in the complex $\Xl$, with middle vertex $S''/B''$. 

A \textit{path of links} from a rank $1$ fibration $(S/B, \phi)$ to another rank fibration $(S'/B',\varphi')$ is a path in $\Xl$ from  $(S/B, \phi)$ to $(S'/B', \phi')$ that passes only through edges of type $1,2$.

\begin{proposition}{\cite[Proposition 2.6]{LZ17}} \label{pro:from S3}
Let $(S'/B, \phi)$ be a marked rank 3 fibration.
Then there exist finitely many squares in $\Xl$ with $S'$ as a corner, and
the union of these squares is a subcomplex of $\Xl$ homeomorphic to a disk
with center corresponding to $S'$.
\end{proposition}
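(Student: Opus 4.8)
The plan is to read the squares around $S'$ off the relative Mori theory of the fibration $\pi\colon S'\to B'$. Since $-K_{S'}$ is $\pi$-ample, $S'/B'$ is relatively Fano, so by the relative cone theorem the relative cone of curves $\NEbar(S'/B')\subset N_1(S'/B')_\R$ is a rational polyhedral cone of dimension $\rho(S'/B')=3$ on which $-K_{S'}$ is positive away from the origin. Working throughout with $\mathrm{Gal}(\C/\R)$-invariant classes, the slice $P:=\NEbar(S'/B')\cap\{-K_{S'}=1\}$ is a compact convex polygon whose finitely many faces correspond to the contractions of $S'$ over $B'$, that is, to the fibrations through which $S'/B'$ factorizes. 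In particular there are only finitely many rank $2$ fibrations $S^2$ and rank $1$ fibrations $S^1$ dominated by $S'$, hence finitely many triangles and squares having $S'$ as a corner.

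Next I would pin down the incidences. A square with $S'$ as a corner has $S'$ as its unique rank $3$ corner, two rank $2$ fibrations $S^2_1,S^2_2$ as the adjacent corners, which I call the spokes, and a rank $1$ fibration $S^1$ as the opposite corner. By Lemma~\ref{lem:2 triangles} the edge of type $3,1$ from $S'$ to $S^1$ bounds exactly two triangles, which glue to a single square; conversely each square determines such a diagonal. Thus the squares are in bijection with the rank $1$ fibrations dominated by $S'$, and each such $S^1$ lies on exactly one square. On the other hand a spoke $S^2/B^2$ has $\rho(S^2/B^2)=2$, so its relative cone $\NEbar(S^2/B^2)$ is two-dimensional with exactly two extremal rays; contracting them produces exactly two rank $1$ fibrations, both automatically dominated by $S'$. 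Hence every spoke lies on exactly two squares.

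Now consider the link of $S'$: let $\Gamma$ be the graph whose vertices are the spokes and whose edges are the squares, each edge joining the two spokes of its square. The previous paragraph says exactly that $\Gamma$ is $2$-regular, so it is a disjoint union of cycles, and that the union $U$ of the squares is homeomorphic to the cone over $\Gamma$ with apex $S'$, each square contributing the cone over one edge of $\Gamma$ (its opposite corner $S^1$ sitting on the outer boundary). Since the cone over a disjoint union of $k$ circles is a bouquet of $k$ disks glued at the apex, $U$ is homeomorphic to a disk centred at $S'$ precisely when $\Gamma$ is a single cycle.

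The main obstacle is therefore the connectivity of $\Gamma$, which I would deduce from $P$. Each spoke and each corner is recorded by a proper face of $P$, a vertex or an edge, and two faces incident along $\partial P$ yield a spoke and a corner sharing a square; as $\partial P$ is a single topological circle this forces the cyclic adjacency of $\Gamma$ to close up into one cycle, whence $U$ is a disk. I expect the delicate points to be establishing this dictionary between the faces of $P$ and the spokes and corners uniformly in the two cases $B'=\pt$ and $B'=\p^1$, where an edge of $P$ may be of birational type, contributing a rank $1$ del Pezzo corner, or of fibration type, contributing a rank $2$ conic bundle spoke together with the two corners of its own two-ray game, and where real $(-1)$-curves and conjugate pairs of $(-1)$-curves must be handled simultaneously; and in checking that distinct faces give genuinely distinct fibrations, so that $U$ embeds in $\Xl$ as an honest disk rather than an immersed one.
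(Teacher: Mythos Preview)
The paper does not contain a proof of this proposition: it is quoted verbatim from \cite[Proposition~2.6]{LZ17} and used as a black box, so there is nothing in the present paper to compare your argument against.

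That said, your outline is the standard one and is essentially how the result is proved in \cite{LZ17}: one uses the relative cone theorem for the Fano fibration $S'/B'$ to see that the three-dimensional cone $\NEbar(S'/B')$ has finitely many faces, reads off the rank~$2$ and rank~$1$ fibrations dominated by $S'/B'$ from the codimension-one and codimension-two faces, and then uses the two-rays game to see that the link of $S'$ is a single cycle. Your identification of the combinatorics (each rank~$1$ corner in a unique square via Lemma~\ref{lem:2 triangles}, each rank~$2$ spoke in exactly two squares via its own two extremal rays) is correct, and your reduction of the disk property to the connectedness of the link graph $\Gamma$ is the right move. The point you flag as delicate---matching faces of the polygon $P$ with fibrations when $B'=\pt$ versus $B'=\p^1$, and distinguishing divisorial from fibre-type contractions---is indeed where the bookkeeping lies, but it is routine once one observes that in dimension~$2$ every extremal contraction over $B'$ is either the blow-down of a Galois orbit of $(-1)$-curves or a conic bundle structure, and that the boundary circle $\partial P$ records exactly the cyclic order in which these occur.
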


In the situation of Proposition \ref{pro:from S3}, by going around the boundary
of the disc we obtain a path of Sarkisov links whose composition is
an automorphism.
We say that this path is an \textit{elementary relation} between
links, coming from the rank $3$ fibration ${S'}/B$.
More generally, any composition of links that corresponds to a loop
in the complex $\Xl$ is called a \textit{relation} between Sarkisov
links.

\begin{theorem}{\cite[Proposition 3.14, Proposition 3.15]{LZ17}}\label{thm:sarkisov}
\begin{enumerate}
\item \label{sarkisov1} 
Any birational map between rank 1
fibrations is a composition of links and isomorphisms. In particular the complex
$\Xl$ is connected. 
\item \label{sarkisov2} 
Any relation between links is generated by elementary relations, and in particular $\Xl$ is simply connected.
\end{enumerate} 
\end{theorem}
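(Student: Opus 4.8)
\emph{Proof proposal.} The statement is the Sarkisov program for smooth rational surfaces over $\R$, rephrased through the complex $\Xl$, so my plan is to deduce both parts from the relative minimal model program. For part \eqref{sarkisov1}, given a birational map $f\colon S/B\rat S'/B'$ between rank $1$ fibrations, I would fix a common resolution $W\to S$, $W\to S'$ and transport a very ample linear system $\Hl'$ on $S'$ to its mobile strict transform $\Hl$ on $S$. The Sarkisov degree — the pair consisting of the relative threshold $\mu$ defined by $\Hl\equiv_B-\mu K_S$ and the canonical threshold of $(S,\tfrac1\mu\Hl)$, refined by the usual integer invariant counting crepant valuations of maximal multiplicity — measures the failure of $f$ to be an isomorphism of fibrations. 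A single Sarkisov link of type I–IV strictly decreases this degree in the lexicographic order, and on a smooth rational surface each link is realised by morphisms of relative Picard number $1$, since the only $K$-negative extremal contractions are the contraction of a real $(-1)$-curve or of a pair of conjugate non-real $(-1)$-curves, together with the structure maps to $\pt$ or to $\p^1$. As the degree is bounded below, the process stops after finitely many links; this factorises $f$, and connectedness of $\Xl$ follows because every rank $2$ or rank $3$ vertex factorises through a rank $1$ vertex.

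For part \eqref{sarkisov2} I would read ``simply connected'' combinatorially: a relation between links is a loop in the $1$-skeleton through rank $1$ and rank $2$ vertices, and I must show that every such loop bounds a disc built from the elementary relations of Proposition \ref{pro:from S3}, that is, from the discs centred at rank $3$ vertices. The plan is to order relations by a complexity assembled from the Sarkisov degrees of the rank $1$ fibrations met along the loop, and to contract any nontrivial loop to one of strictly smaller complexity. At a rank $1$ vertex of maximal degree on the loop I would analyse its two neighbouring links: the two rank $2$ fibrations on either side both factorise through a common rank $3$ fibration, so the elementary relation of that rank $3$ disc lets me replace the incoming pair of links by the complementary arc of the disc, strictly lowering the maximal degree. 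Since above any configuration there are only finitely many rank $3$ fibrations (Proposition \ref{pro:from S3}), iterating this reduction terminates at the constant loop.

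The main obstacle is precisely this reduction in part \eqref{sarkisov2}: one must prove that there are \emph{no} relations among Sarkisov links beyond those generated by the rank $3$ discs, which is the surface analogue of Kaloghiros's theorem that relations in the Sarkisov program come from rank $3$ fibrations. Making this precise amounts to showing that the two-ray game governing each MMP step is rigid, so that the only ambiguity in untwisting a link is the choice recorded by a rank $3$ fibration. Over $\R$ this forces extra care with the Galois action: an extremal contraction may contract a conjugate pair of non-real $(-1)$-curves rather than a single real one, and the del Pezzo and conic-bundle surfaces arising as rank $2$ and rank $3$ centres (as in Example \ref{ex:quadric_conicbundle}) must be classified together with their real structures before the bookkeeping of the loop reduction goes through.
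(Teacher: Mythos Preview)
The paper does not actually prove this theorem: it is quoted verbatim from \cite[Proposition~3.14, Proposition~3.15]{LZ17} and no argument is supplied here, so there is no in-paper proof to compare your proposal against. Your sketch is in line with the standard strategy for such results---part~\eqref{sarkisov1} via the classical Sarkisov untwisting with a decreasing invariant (as in Iskovskikh's surface version), and part~\eqref{sarkisov2} via a Kaloghiros-type reduction showing that all ambiguity in factorisation is accounted for by rank~$3$ centres---and this is indeed the approach taken in \cite{LZ17}, though the details there are organised somewhat differently (the simple connectedness is obtained by exhibiting $\Xl$ as a union of contractible pieces and analysing their nerve rather than by an explicit loop-shortening induction).

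One caution on your part~\eqref{sarkisov2} sketch: the assertion that the two rank~$2$ fibrations adjacent to a maximal-degree vertex ``both factorise through a common rank~$3$ fibration'' is precisely the nontrivial content and is not automatic; in general two distinct rank~$2$ fibrations dominating the same rank~$1$ fibration need not be dominated by a single rank~$3$ fibration, so the induction as you phrase it has a gap unless you supply exactly this domination statement. In \cite{LZ17} this issue is handled not by producing such a common roof directly but by a global argument on the complex, which is why the proof there does not follow the loop-shortening template you outline.
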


The first part of Theorem~\ref{thm:sarkisov} can also be found in \cite[Theorem 2.5]{Isk96}. In fact, a relative version can be extracted from the classification of links in \cite[Theorem 2.6]{Isk96}.

\begin{proposition}\label{prop:relative_sarkisov}
Let $B$ be a curve and $S/B$ and $S'/B$ two rank $1$ fibrations. 
Any birational map $f\colon S\rat S'$ over $B$ is a composition of Sarkisov links of type \IIa over $B$. In particular:
\begin{enumerate}
\item\label{relative_sarkisov:1} Let $\eta\colon \p^2\rat\F_1$ be blow-up of $[0:0:1]$. Then any element of $\eta\Jls\eta^{-1}$ is a composition of isomorphisms and links of type~\IIa between Hirzebruch surfaces.

\item\label{relative_sarkisov:2} Let $\eta:=\eta_2\eta_1\colon\p^2\rat\Cl_6$ where $\eta_1\colon\p^2\rat\Ql$ is the link of type~\IIa blowing up $[1:i:0],[1:-i:0]$ and contracting the line passing through them, and $\eta_2\colon\Ql\rat\Cl_6$ is the blow-up of $\eta_1([0:1:i])$ and $\eta_1([0:1:-i])$. 
Then any element of $\eta\Jlo\eta^{-1}$ is a composition isomorphisms of $\Cl_6$ and links $\Cl_6/\p^1\rat\Cl_6/\p^1$ of type~\IIa.
\end{enumerate}
\end{proposition}

\subsection{Elementary discs}
We call the disc with center a rank $3$ fibration from Proposition~\ref{pro:from S3} an {\em elementary disc}. 
In this section, we classify them and therewith obtain an explicit list of elementary relations among rank $1$ fibrations.

\begin{lemma}\label{lem:no_interval}
Any edge of $\Xl$ is contained in a square. 
In particular, $\Xl$ is the union of elementary discs.
\end{lemma}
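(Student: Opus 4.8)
The plan is to reduce the claim to the assertion that every edge of $\Xl$ lies in a \emph{triangle} of the triangulated complex, and then to establish that assertion by analysing the rank $2$ endpoint of the edge. Every edge of the square complex $\Xl$ has type $3,2$ or $2,1$, and in both cases exactly one endpoint is a rank $2$ fibration $S'/B'$. I would first record the formal reduction: suppose our edge lies in a triangle $(S''^3/B'',\,S'^2/B',\,S^1/B)$. The side of this triangle joining the rank $3$ vertex $S''$ to the rank $1$ vertex $S$ has type $3,1$, so by Lemma~\ref{lem:2 triangles} it is a side of exactly two triangles; gluing these along it produces a square of $\Xl$ containing the whole triangle, and in particular the type $3,2$ or type $2,1$ edge we started from (these are genuine sides of the square, whereas the type $3,1$ edge becomes the diagonal). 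Hence it suffices to put every edge into a triangle.

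Because factorisation is transitive — if $W/E$ factorises through $T/C$ and $T/C$ factorises through $U/D$, then the composed morphism and composed base map exhibit $W/E$ as factorising through $U/D$, as one reads off the defining diagrams — constructing such a triangle reduces to two assertions about the rank $2$ fibration $S'/B'$: \emph{(a)} $S'/B'$ factorises through some rank $1$ fibration, which supplies the third vertex of a triangle on an edge of type $3,2$; and \emph{(b)} some rank $3$ fibration factorises through $S'/B'$, which supplies the third vertex of a triangle on an edge of type $2,1$. Assertion (a) is exactly the two rays game: since $-K_{S'}$ is $\pi$-ample the relative Mori cone $\NEbar(S'/B')$ is a two-dimensional rational polyhedral cone, and contracting either of its two extremal rays gives a relative-Picard-number-$1$ morphism onto a rank $1$ fibration through which $S'/B'$ factorises.

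For (b) I would argue according to the base $B'$. If $B'=\pt$, then $S'$ is a del Pezzo surface of Picard rank $2$, and over $\R$ these all have degree at least $6$ (they are $\F_0$, $\F_1$, $\Cl_6$, or the blow-up of $\p^2$ or $\Ql$ at a single Galois orbit); blowing up a general real point yields a del Pezzo surface of rank $3$, i.e.\ a rank $3$ fibration over $\pt$ factorising through $S'$. If $B'=\p^1$, then $S'/\p^1$ is a conic bundle of relative Picard rank $2$, and I blow up a general real point $p$ on a smooth fibre $F$ (such points exist as the real locus is Zariski dense), obtaining $\sigma\colon W\to S'$ with $\rho(W/\p^1)=3$. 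The relative ampleness of $-K_W$ is verified on the fibre components: with $\tilde F=\sigma^*F-E$ one computes $-K_W\cdot\tilde F=(-K_{S'}\cdot F)-1=2-1=1$ and $-K_W\cdot E=1$, both positive, while all remaining fibres are untouched; thus $W/\p^1$ is a rank $3$ fibration factorising through $S'/\p^1$. Combining (a), (b) and the reduction above shows that every edge lies in a square, and since each square sits inside the elementary disc of its rank $3$ corner by Proposition~\ref{pro:from S3}, this also yields the final assertion that $\Xl$ is the union of elementary discs.

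The main obstacle is assertion (b): one must guarantee in every situation that an additional blow-up can be performed without destroying the relative ampleness of $-K$, which is precisely why the argument splits according to the base $B'$ and why the relative anticanonical intersection numbers have to be checked. By contrast, assertion (a) is immediate from the relative cone theorem, and the passage from a triangle to a square via Lemma~\ref{lem:2 triangles} is purely combinatorial.
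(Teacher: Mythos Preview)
Your proof is correct and rests on the same core idea as the paper's: to pass from a rank $2$ fibration $S'/B'$ to a rank $3$ fibration one blows up a general (real) point, checking that relative ampleness of $-K$ survives. The paper organises this slightly differently. For a type $3,2$ edge it argues exactly as you do (pick any rank $1$ vertex below the rank $2$ vertex and invoke Lemma~\ref{lem:2 triangles}). For a type $2,1$ edge, however, it does \emph{not} go through a triangle and Lemma~\ref{lem:2 triangles}; instead it writes down the square directly, by distinguishing the two shapes of such an edge: either $S/\pt \leftarrow S/\p^1$ with $S$ a del Pezzo of rank $2$ (then the square has corners $S/\pt$, $S''/\pt$, $S''/\p^1$, $S/\p^1$), or $S'/B \to S/B$ a blow-up (then the square records the commutation of two independent blow-ups).

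Your reduction ``triangle $\Rightarrow$ square via Lemma~\ref{lem:2 triangles}'' is a clean uniform substitute for this explicit square construction and avoids the case split on the shape of the $2,1$ edge; the paper's version, in exchange, makes the actual squares visible, which feeds directly into the later classification of elementary discs. One small point worth tightening in your write-up: the existence of a real point on a smooth fibre in the $B'=\p^1$ case follows from the surface being $\R$-rational (hence $S'(\R)$ Zariski dense), which you invoke but could state more explicitly.
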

\begin{proof}
Any edge $e$ of $\Xl$ contains a vertex $S/B$ that is a rank $2$ fibration. 

Suppose that $e$ is of type $2,3$. From the two rays-game on $S$ we obtain an edge in $\Xl$ of type $1,2$ attached to $S/B$. This yields an edge of type $1,3$, which, by Lemma~\ref{lem:2 triangles}, is contained in a square. 

Suppose that $e$ is of type $1,2$. We now produce an edge of type $2,3$ attached to $S/B$, which will imply that $e$ is contained in a square as above by Lemma~\ref{lem:2 triangles}. 
If $S$ is a del Pezzo surface, then $2\leq\rho(S)\leq 3$ and so by Lemma~\ref{lem:Mfs}, $S$ is the blow-up of $\p^2$ or $\Ql$ in at most four points and hence $(K_{S})^2\geq4$. Therefore, the blow-up of $S$ in a real point or a pair of non-real conjugate points in general position yields a del Pezzo surface $S'$ and $S'/B$ is a rank $3$ fibration.
If $S$ is not a del Pezzo surface, it is the blow-up of a Hirzebruch surface $\F_n$, $n\neq0,1$. The blow-up of $S$ in a real point or a pair of non-real conjugate points not contained in the same fibre nor in any singular fibre of $S/\p^1$ yields a rank $3$ fibration $S'/\p^1$. In any case, we have obtained an edge of type $2,3$ attached to $S/B$.

By Proposition~\ref{pro:from S3}, any square is contained in an elementary disc, so $\Xl$ is the union of elementary discs. 
\end{proof}

Lemma~\ref{lem:no_interval} is not true in general for an arbitrary perfect field $\k$. Indeed, if $\k$ has an extension of degree $8$, then a Bertini involution whose set of base-points consists of one single Galois-orbit is a link of type II whose corresponding two edges in $\Xl$ are not contained in any square \cite[Lemma 4.3]{LZ17}. 

We now give some examples of elementary discs in $\Xl$ and will then prove that our list is exhaustive. 
In what follows, $X_d$ is a del Pezzo surface of degree $d=(K_{X_d})^2$. 

\begin{example}\label{disc:1}
We now describe a {\em disc of type \I}, pictured in Figure~\ref{fig:disc1}
Pick two general real points $p,q\in\Ql$. The surface $\Ql_{\C}$ is isomorphic to $\p^1_{\C}\times\p^1_{\C}$ and in $Q$, the two fibres through $p$ (resp. $q$) are a pair of non-real conjugate curves, whose union we denote by by $C_p\subset\Ql$ (resp. $C_q\subset\Ql$).
Let $X_7\to\Ql$ be the blow-up of $p$. 
Blowing up $q$ on $X_7$ yields morphisms $X_6\to X_7$. 
We can contract the strict transform $\tilde{C}_p$ of $C_p$ on $X_7$ onto a pair of non-real conjugate points in $\p^2$. By abuse of notation, we also denote by $\tilde{C}_p$ the strict transform on $X_6$. On $X_6$, $\tilde{C}_p$ and the exceptional divisor $E_q$ of $q$ are disjoint, and contracting first $\tilde{C}_p$ and then $E_q$ yields the square on the lower left. 
Analogously, we obtain the square on the right. Blowing up $p$ and $q$ in different order yields the middle square. 
The two fibrations $\F_1/\p^1$ lift to two fibrations $X_6/\p^1$. This yields the upper two squares. The curves $E_q$, $E_p$ and the geometric components of $\tilde{C}_p$ and $\tilde{C}_q$ are the only $(-1)$-curves on $X_6$. We have obtained a disc around the the rank $3$ fibration $X_6/\pt$.
\begin{figure}[ht]
\[
\begin{tikzpicture}[baseline= (a).base]
\node[scale=.75](a) at (0,0){
\begin{tikzcd}[cramped,sep=small]
\F_1/\p^1&&X_6/\p^1\ar[ll,swap,"\tilde{C}_p"]\ar[rr,"\tilde{C}_q"]&&\F_1/\p^1\\
\F_1/\pt\ar[u]\ar[dd,"q"]&&X_6/\pt\ar[ll,"\tilde{C}_p",swap]\ar[rr,"\tilde{C}_q"]\ar[u]\ar[dl,"q"]\ar[dr,"p",swap]&&\F_1/\pt\ar[u]\ar[dd,"p",swap]\\
&X_7/\pt \ar[dl,"\tilde{C}_p"]\ar[dr,"p"]&&X_7/\pt \ar[dr,"\tilde{C}_q",swap]\ar[dl,"q",swap]&\\
\p^2/\pt&& \Ql/\pt&&\p^2/\pt
\end{tikzcd}
};
\end{tikzpicture}
\]
\caption{A disc of type~\I.} 
\label{fig:disc1}
\end{figure}
\end{example}

\begin{remark}\label{rmk:Cl6}
The surface $\Cl_6$ is obtained from $\Ql$ by blowing up the pair of non-real conjugate points $[0:1:i:0],[0:1:-i:0]$. It is a del Pezzo surface of degree $6$ and hence has six $(-1)$-curves. 
The conic bundle $\Cl_6/\p^1$ has exactly two singular fibres, each of which has exactly two components, which are conjugate $(-1)$-curves. The remaining two $(-1)$-curves is a pair of non-real conjugate sections of $\Cl_6/\p^1$; they are the exceptional divisors of the morphism $\Cl_6\to\Ql$ blowing up $[0:1:i:0],[0:1:-i:0]$.

$\bullet$ The surface $\Cl_5$ obtained by blowing up one real point on $\Cl_6$ not contained in any $(-1)$-curve on $\Cl_6$ is a del Pezzo surface of degree $5$ and inherits the conic bundle structure from $\Cl_6$. It can also be obtained by blowing up two pairs of non-real conjugate points on $\p^2$ in general position: the surface $\Cl_5$ contains ten $(-1)$-curves, namely the exceptional divisors of the four blown-up points and the strict transform of the lines passing through any two of them. The latter form two pairs of non-real conjugate lines and two real lines. 
Contracting one of the real $(-1)$-curves yields a birational morphism $\Cl_5/\p^1\to\Cl_6/\p^1$.

$\bullet$ The surface $\Cl_4$ obtained by blowing up a pair of non-real conjugate points $r,\bar{r}$ in $\Cl_6$ not contained in any $(-1)$-curve, is a del Pezzo surface of degree $4$. It can be obtained by blowing up $\p^2$ in three pairs of non-real conjugate points $p,\bar{p}$, $q,\bar{q}$, $r,\bar{r}$, not all contained on one conic, composed by the contraction the strict transform of the line $L_{p\bar{p}}$ passing through $p,\bar{p}$. The $(-1)$-curves on $\Cl_4$ form eight pairs of non-real conjugate curves. Among them, the only curves which are disjoint from their conjugate are the images of the exceptional divisors of $q,\bar{q},r,\bar{r}$ and the strict transform of the conics passing through $p,\bar{p}$ and three of $q,\bar{q},r,\bar{r}$. 

$\bullet$ The surface obtained by blowing up two real points $r,s$ in $\Cl_6$ has a similar discription to $\Cl_4$, and can be obtained by blowing up points $p,\bar{p},q,\bar{q},r,s$ on $\p^2$, no three collinear, and contracting the line through $p,\bar{p}$. Its sixteen $(-1)$-curves form seven pairs of conjugate curves and two real curves, the latter two being the strict transforms of the conics through $p,\bar{p},q,\bar{q},r$ and $p,\bar{p},q,\bar{q},s$.  
\end{remark}

\begin{example}\label{disc:2}
We describe a {\em disc of type~\II}, pictured in Figure~\ref{fig:disc2}.
Pick two real points $p$ and $q$ on the conic bundle $\Cl_6/\p^1$ not contained in the same fibre. Let $S\to\Cl_6$ be the blow-up of $p$ and $S'\to\Cl_6$ the blow-up of $q$. Blowing up $q$ on $S$ and $p$ on $S'$ yields morphisms $X\to S$ and $X\to S'$ and the lower square. The rank $3$ fibration $X/\p^1$ has exactly four singular fibres: the pre-image of the fibre $f_p$ containing $p$, the pre-iamge of the fibre $f_q$ containing $q$, and the pre-image of the two singular fibres of $\Cl_6/\p^1$. We can contract the strict transforms $\tilde{f}_p$ of $f_p$ and $\tilde{f}_q$ of $f_q$ onto real points over $\p^1$. Contracting both yields a morphism $X/\p^1\to\Cl_6/\p^1$ and the remaining squares. By Remark~\ref{rmk:Cl6}, there are no other contractions from $X$, and so we have obtained a disc around the vertex $X/\p^1$.  
An analogous construction can be made for two pairs of non-real conjugate points $p,\bar{p}$ and $q,\bar{q}$, where no two of $p,\bar{p},q,\bar{q}$ are on the same fibre, and for a real point $p$ and a pair of non-real conjugate points $q,\bar{q}$, no two of which are on the same fibre. 
\begin{figure}[ht]
\[
\begin{tikzpicture}[baseline= (a).base]
\node[scale=.75](a) at (0,0){
\begin{tikzcd}[cramped,sep=small]
&&\Cl_6/\p^1&&\\
&S''/\p^1\ar[ur,"\tilde{f}_q"]\ar[dl,swap,"q"]&&S'''^/\p^1\ar[ul,swap,"\tilde{f}_p"]\ar[dr,"p"]&\\
\Cl_6/\p^1&&X/\p^1\ar[ur,"\tilde{f}_q"]\ar[ul,swap,"\tilde{f}_p"]\ar[dl,"q"]\ar[dr,"p"]&&\Cl_6/\p^1\\
&S/\p^1\ar[dr,"p"]\ar[ul,swap,"\tilde{f}_p"]&&S'/\p^1\ar[dl,swap,"q"]\ar[ur,"\tilde{f_q}"]&\\
&&\Cl_6/\p^1&&\\
\end{tikzcd}
};
\end{tikzpicture}
\]
\caption{A disc of type~\II, where $p$ (resp. $q$) denotes a real point or a pair of non-real conjugate points.} 
\label{fig:disc2}
\end{figure}
\end{example}

\begin{example}\label{disc:3}
We describe a {\em disc of type~\III}, pictured in Figure~\ref{fig:disc3}.
Pick two pairs of non-real conjugate points $p,\bar{p}$ and $q,\bar{q}$ in $\p^2$ that are not collinear. Let $X_7\to\p^2$ be the blow-up of $p,\bar{p}$. The blow-up of $q,\bar{q}$ on $X_7$ yields a morphism $\Cl_5\to X_7$ (see Remark~\ref{rmk:Cl6}). Blowing up $p,\bar{p}$ and $q,\bar{q}$ in different order yields the lower middle square. 
On $\Cl_5$, the strict transform $\tilde{L}_p$ of the line $L_p$ passing through $p,\bar{p}$ is a real $(-1)$-curve and disjoint from the exceptional divisor $E_q$ of $q$. Contracting $\tilde{L}_p$ and $E_q$ yields a birational morphism $\Cl_5\to\Ql$. The order of the contractions yields the lower left square. The contraction of $\tilde{L}_p$ preserves the conic bundle structure in $\Cl_5$, which yields the left upper square. We repeat the same construction with $q$ instead of $p$ and obtain the right side of the disc. The remaining complex $(-1)$-curves on $\Cl_5$ are the strict transforms of the lines passing through one of each pair $p,\bar{p}$ and $q,\bar{q}$. They form conjugate pairs of intersecting curves and hence cannot be contracted. We have thus obtained a disc around the the rank $3$ fibration $\Cl_5/\pt$. 
\begin{figure}[ht]
\[
\begin{tikzpicture}[baseline= (a).base]
\node[scale=.75](a) at (0,0){
\begin{tikzcd}[cramped,sep=small]
\Cl_6/\p^1&&&\Cl_5/\p^1\ar[lll,swap,"\tilde{L}_p"]\ar[rrr,"\tilde{L}_q"]&&&\Cl_6/\p^1\\
&\Cl_6/\pt\ar[ul]\ar[dl,swap,"q\bar{q}"]&&\Cl_5/\pt\ar[u]\ar[ll,swap,"\tilde{L}_p"]\ar[rr,"\tilde{L}_q"]\ar[dl,"q\bar{q}"]\ar[dr,swap,"p\bar{p}"]&&\Cl_6/\pt\ar[ur]\ar[dr,"p\bar{p}"]&\\
\Ql/\pt&& X_7/\pt\ar[ll,"\tilde{L}_p"]\ar[dr,"p\bar{p}"]&&X_7/\pt\ar[rr,swap,"\tilde{L}_q"]\ar[dl,swap,"q\bar{q}"]&&\Ql/\pt\\
&&&\p^2/\pt&&&
\end{tikzcd}
};
\end{tikzpicture}\]
\caption{A disc of type~\III.} 
\label{fig:disc3}
\end{figure}
\end{example}

\begin{example}\label{disc:4}
We describe a {\em disc of type~\IV}, pictured in Figure~\ref{fig:disc4}.
Pick two pairs of non-real conjugate points $p,\bar{p}$ and $q,\bar{q}$ in $\Ql$ in general position. Blowing up $p,\bar{p}$ yields a morphism $\Cl_6\to\Ql$. Blowing up the points $q,\bar{q}$ yields the birational morphism $\Cl_4/\p^1\to\Cl_6/\p^1$. Denote by $\tilde{C}_q\subset\Cl_4$ the strict transform of the pair of non-real conjugate fibres of $\Cl_6/\p^1$ containing $q$ and $\bar{q}$. Its contraction yields a morphism to $\Cl_6$ over $\p^1$. We have now obtained the two left squares in Figure~\ref{fig:disc4}. We can repeat the construction with $q$ instead of $p$ and obtain the right squares in Figure~\ref{fig:disc4}. The blow-ups of $p,\bar{p}$ and $q,\bar{q}$ commute, which yields the lower square. The upper square corresponds to the different orders of contraction of the disjoint pairs $\tilde{C}_p$ and $\tilde{C}_q$. 
By Remark~\ref{rmk:Cl6}, there are no other contractions possible from $\Cl_4$, hence we have obtained a disc around the rank $3$ fibration $\Cl_4/\pt$.
\begin{figure}[ht]
\[
\begin{tikzpicture}[baseline= (a).base]
\node[scale=.75](a) at (0,0){
\begin{tikzcd}[cramped,sep=small]
&&&\Ql/\pt&&&\\
\Cl_6/\p^1&&\Cl_6/\pt\ar[ur,"\tilde{C}_p"]\ar[ll]&&\Cl_6/\pt\ar[ul,"\tilde{C}_q",swap]\ar[rr]&&\Cl_6/\p^1\\
&\Cl_4/\p^1\ar[ul,"\tilde{C}_q"]\ar[dl,"q\bar{q}",swap]&&\Cl_4/\pt\ar[ll]\ar[rr]\ar[ul,"\tilde{C}_q",swap]\ar[ur,"\tilde{C}_p"]\ar[dr,"p\bar{p}",swap]\ar[dl,"q\bar{q}"]&&\Cl_4/\p^1\ar[ur,"\tilde{C}_q",swap]\ar[dr,"p\bar{p}"]\\
\Cl_6/\p^1&&\Cl_6/\pt\ar[dr,"p\bar{p}"]\ar[ll]&&\Cl_6/\pt\ar[dl,"q\bar{q}",swap]\ar[rr]&&\Cl_6/\p^1\\
&&&\Ql/\pt&&&\\
\end{tikzcd}
};
\end{tikzpicture}
\]\caption{A disc of type~\IV.} 
\label{fig:disc4}
\end{figure}
\end{example}

\begin{example}\label{disc:5}
We describe a {\em disc of type~\V}, pictured in Figure~\ref{fig:disc5}.
Pick two real points $p,q\in\p^2$. Let $\F_1\to\p^2$ be the blow-up of $p$. The pencil of lines in $\p^2$ through $p$ induces the fibration $\F_1/\p^1$, and the fibre containing $q$ is the strict transform of the line $L$ through $p$ and $q$. The blow-up $X_7\to\F_1$ of $q$ induces a fibration $X_7/\p^1$, and the contraction of the strict transform $\tilde{L}$ of $L$ yields a morphism $X_7\to\F_0$ preserving this fibration. This yields the left half of the disc. Exchanging the roles of $p$ and $q$ yields the right half, with the fibration $X_7/\p^1$ induced by the pencil of lines in $\p^2$ passing through $q$, and the lower middle square. The induced fibrations on $\F_0=\p^1\times\p^1$ are the two projections. On $X_7$ there are only three $(-1)$-curves, all of which are real curves, so our disc is complete.
\begin{figure}[ht]
\[
\begin{tikzpicture}[baseline= (a).base]
\node[scale=.75](a) at (0,0)
{\begin{tikzcd}[cramped,sep=small]
&\F_0/\p^1&&\F_0/\p^1&\\
&&\F_0/\pt\ar[ul]\ar[ur]&& \\
&X_7/\p^1\ar[uu,"\tilde{L}"]\ar[dl,"p",swap]&&X_7/\p^1 \ar[uu,"\tilde{L}",swap]\ar[dr,"q"]&\\
\F_1/\p^1&&X_7/\pt \ar[ur]\ar[ul]\ar[uu,"\tilde{L}"]\ar[dl,"q"]\ar[dr,"p",swap]&& \F_1/\p^1\\
&\F_1/\pt\ar[ul] \ar[dr,"p"]&&\F_1/\pt\ar[ur]\ar[dl,"q",swap]\\
&&\p^2/\pt&&
\end{tikzcd}
};
\end{tikzpicture}
\]
\caption{A disc of type~\V.} 
\label{fig:disc5}
\end{figure}
\end{example}

\begin{example}\label{disc:6}
A {\em disc of type~\VI} is constructed analogously to a disc of type~\II, and is pictured in Figure~\ref{fig:disc6}, but by using the conic bundle $\F_n/\p^1$, $n\geq0$, instead of $\Cl_6/\p^1$. As in Figure~\ref{fig:disc2}, the points $p$ and $q$ in Figure~\ref{fig:disc6} refer to real points or pairs of non-real conjugate points. Moreover, we have $m=n+1$ (resp. $m=n+2$) if $p$ is a real point (resp. a pair of non-real conjugate points $p$) contained in the exceptional section of $\F_n$, and $m=n-1$ (resp. $m=n-2$) otherwise. The same holds for $l$ and $q$ instead of $m$ and $p$, which yields the possible values of $k$. In particular, there are only Hirzebruch surfaces in such a disc. 
\begin{figure}[ht]
\[
\begin{tikzpicture}[baseline= (a).base]
\node[scale=.75](a) at (0,0){
\begin{tikzcd}[cramped,sep=small]
&&\F_{k}/\p^1&&\\
&S^2/\p^1\ar[ur,"\tilde{f}_q"]\ar[dl,swap,"q"]&&S^2/\p^1\ar[ul,swap,"\tilde{f}_p"]\ar[dr,"p"]&\\
\F_{m}/\p^1&&S^3/\p^1\ar[ur,"\tilde{f}_q"]\ar[ul,swap,"\tilde{f}_p"]\ar[dl,swap,"q"]\ar[dr,"p"]&&\F_{l}/\p^1\\
&S^2/\p^1\ar[dr,"p"]\ar[ul,swap,"\tilde{f}_p"]&&S^2/\p^1\ar[dl,swap,"q"]\ar[ur,"\tilde{f}_q"]&\\
&&\F_n/\p^1&&\\
\end{tikzcd}
};
\end{tikzpicture}
\]
\caption{A disc of type~\VI.} 
\label{fig:disc6}
\end{figure}
\end{example}

\begin{proposition}\label{lem:disc}\item
\begin{enumerate}
\item Any elementary disc in $\Xl$ is a disc of type \I, \dots, \VI. 
\item If two distinct elementary discs intersect, they do so either in exactly one vertex or in path of links. 
\item A disc of type $a\in\{\text{\II,\III,\IV}\}$ and a disc of type $b\in\{\text{\V,\VI}\}$ intersect in at most one vertex.
\end{enumerate}
\end{proposition}
\begin{proof}
The second and third claim follows from (1) and checking Exampes~\ref{disc:1}--\ref{disc:2}. 
Let $\Dl$ be an elementary disc and pick one of its squares $\mathcal{S}$. It contains a unique vertex that is a rank $1$ fibration $S/B$, which is one of the rank $1$ fibrations listed in Lemma~\ref{lem:Mfs}. The edges in $\mathcal{S}$ attached to $S/B$ correspond to blow-ups over $B$ of a real point or a pair of non-real conjugate points or to $\F_0/\pt\to\F_0/\p^1$ by Lemma~\ref{lem:Mfs}. So, the square $\mathcal{S}$ appears in a disc of type~\I, \dots, \VI. The disc $\Dl$ contains a unique rank $3$ fibration, so it is the rank $3$ fibration contained in $\mathcal{S}$. It follows that $\Dl$ is a disc of type~\I, \dots, \VI. 
\end{proof}

\section{The groups $\Gls$, $\Glo$ and $\Hl$, and a quotient of $\Birp$}

\subsection{The group $\Hl$.}

Recall that $\Gls\subset\Birp$ is the group generated by $\Autp$ and the group $\Jls$ of elements preserving the pencil of lines through $[1:0:0]$. 
The group $\Glo\subset\Birp$ is the group generated by $\Autp$ and the group $\Jlo$ of elements preserving the pencil of conics through the two pairs $[1:i:0],[1:-i:0]$ and $[0:1:i],[0:1:-i]$.

We denote by $\Hl\subset \Birp$ the subgroup generated by $\Autp$ and the quadratic involution $\sigma\colon [x:y:z]\dashmapsto[xz:yz:x^2+y^2]$. We have $\Hl\subseteq\Glo\cap\Gls$ since $\sigma\in\Jlo\cap\Jls$.

\begin{lemma}\label{lem:inters_path}
Let $g\in\Birp$. Then 
\begin{enumerate}
\item\label{inters_path:1} $g\in\Glo$ if and only if there exists a path of links from $(\p^2,\id)$ to $(\p^2,g)$ along discs of type~\I, \dots, \IV avoiding any vertices of the form $(\F_n,\varphi)$, $n\geq0$. 
\item\label{inters_path:2} $g\in\Gls$ if and only if there exists a path of links from $(\p^2,\id)$ to $(\p^2,g)$ along discs of type~\I, \V and \VI avoiding any vertices of the form $(\Ql,\varphi)$.
\item\label{inters_path:3}  $g\in\Hl$ if and only if there is a path of links from $(\p^2,\id)$ to $(\p^2,g)$ along discs of type~\I.
\end{enumerate}
\end{lemma}
\begin{proof}
(\ref{inters_path:1})
Let $g\in\Glo$ and write $g=g_{n+1}\alpha_ng_n\cdots\alpha_1g_1$ for some $g_i\in\Jlo$ and $\alpha_i\in\Autp$. Let $\eta\colon\p^2\rat\Cl_6$ the birational map from Proposition~\ref{prop:relative_sarkisov}(\ref{relative_sarkisov:2}). 
Then $\eta g_i\eta^{-1}$ is a birational map of the conic bundle $\Cl_6/\p^1$. 
The map $\eta$ corresponds to a path of links along discs of type~\I, \III and \IV. 
By Proposition~\ref{prop:relative_sarkisov}(\ref{relative_sarkisov:2}), $\eta g_i\eta^{-1}$ decomposes into links $\Cl_6\rat\Cl_6$ of type~\IIa over $\p^1$, corresponds to a path of links along discs of type~\II, \III, \IV. 
So, there exists a path of links from $(\p^2,\id)$ to $(\p^2,g)$ along discs of type~\I, \dots, \IV as claimed. 

Suppose there is a path of links from $(\p^2,\id)$ to $(\p^2,g)$ along discs of type~\I, \dots, \IV according to hypothesis. Then $g$ is the composition of links of type \IIa $\Cl_6/\p^1\rat\Cl_6/\p^1$ or $\p^2\rat\Ql$ blowing up a pair of non-real conjugate points and contracting the line passing through them, or of type~\Ia $\Ql\to\Cl_6$ or of type~\IIIa $\Cl_6\to\Ql$.
In particular, $g$ decomposes into automorphisms of $\p^2$ and elements of $\Jlo$, so is contained in $\Glo$. 

(\ref{inters_path:2}) is shown analogously to (\ref{inters_path:1}) but now $\Jls$ plays the role of $\Jlo$, the elementary discs \I,\V, \VI  play the role of the elementary discs of type ~\I, \II, \III, \IV, and the role of $\eta$ is played by the link $\p^2\rat\F_1$ of type~\I blowing up $[0:0:1]$, which corresponds to a path of links in an elementary disc of type \I. 

(\ref{inters_path:3}) 
The claim follows from the fact that $\sigma\colon [x:y:z]\dashmapsto[xz:yz:x^2+y^2]$ has a decomposition into links corresponding to the path of links $(\p^2/\pt,\id)\longleftarrow X_7/\pt\to\Ql/\pt\longleftarrow X_7/\pt\to(\p^2/\pt,\sigma)$ along a disc of type~\I. 
\end{proof}

\begin{lemma}\label{lem:intersection}
We have $\Hl=\Gls\cap\Glo$. 
\end{lemma}
\begin{proof}
Let $g\in\Gls\cap\Glo$. By Lemma~\ref{lem:inters_path}(\ref{inters_path:1}) there exists a path $\gamma_{\circ}$ of links from $(\p^2,\id)$ to $(\p^2,g)$ along discs of type~\I, \dots, \IV. 
By Lemma~\ref{lem:inters_path}(\ref{inters_path:2}) there exists a path $\gamma_*$ of links from $(\p^2,\id)$ to $(\p^2,g)$ along discs of type~\I, \V, \VI.  
Running from $(\p^2,\id)$ to $(\p^2,g)$ along $\gamma_{\circ}$ and then returning to $(\p^2,\id)$ via $\gamma_*$ yields a loop $\gamma$ in $\Xl$ at $(\p^2,\id)$. 
By Theorem~\ref{thm:sarkisov}(\ref{sarkisov2}), $\gamma$ is the boundary of a finite union $\Dl\subset\Xl$ of intervals and elementary discs. 
By Proposition~\ref{lem:disc}, the elementary discs in the $\Dl_i$ are discs of type ~\I, \dots, \VI, and we colour them as follows: the ones of type~\II, \III, \IV we colour blue, the ones of type~\V, \VI we colour red and the ones of type~\I we colour purple. 
Vertices or edges contained in the intersection of two discs of different colour are coloured purple, which is consistent with the intersection properties of elementary discs by Proposition~\ref{lem:disc}. 
By Lemma~\ref{lem:inters_path}(\ref{inters_path:3}) it suffices to construct a purple path of links from $(\p^2,\id)$ to $(\p^2,g)$ contained in $\Dl$. 
By Lemma~\ref{lem:inters_path}(\ref{inters_path:1})\&(\ref{inters_path:2}), the path $\gammas$ consists of red and purple intervals and $\gammao$ consists of blue and purple intervals, so that $\gammas\cap\gammao$ is a union of purple intervals. 
The closure of $\Dl\setminus (\gammas\cap\gammao)$ is a finite union of discs $\Dl_1,\dots,\Dl_n$ intersecting pairwise in at most one vertex. 
We can assume that $\gammas$ and $\gammao$ do not contain any loops, so that intersections of the $\Dl_i$ are vertices contained in $\gammas\cap\gammao$, which are in particular purple. 
For $i=1,\dots,n$, let $R_i\subset\Dl_i$ be the union of red elementary discs in $\Dl_i$ and $B_i\subset\Dl_i$ the union of blue elementary discs in $\Dl_i$. 
Then $R_i\cap B_i$ is a non-empty finite set of vertices. Since $\Dl_i$ is a disc, 
$P_i:=\Dl_i\setminus (R_i\cup B_i)$ is covered by purple discs and has a connected component containing $\Dl_i\cap\gammas\cap\gammao$.
\end{proof}

\subsection{The group $\Glo$}

We denote by $\sigma\in\Birp$ the quadratic map 
\[\sigma\colon [x:y:z]\dashmapsto[xz:yz:x^2+y^2].\] 
Recall that for any quadratic map $f\in\Birp$ with a pair of non-real conjugate base-points and one real base-point, there exist $\alpha,\beta\in\Autp$ such that $f=\alpha\sigma\beta$. 
Furthermore, if $\deg(\sigma\alpha\sigma)=2$, then $\sigma\alpha\sigma$ has one real and a pair of non-real conjugate base-points. 

\begin{remark}\label{rmk:deg}
Let $C\subset\p^2$ be a curve and let $f\in\Jls$ be of degree $d$. Let $p_0:=[0:0:1],p_1,\dots,p_{2d-2}$ be the base-points of $f$ and denote by $m_C(t)$ is the multiplicity of $C$ in a point $t$. If $f(C)$ is a curve, then 
\begin{align*}
\deg(f(C))
&=\deg(C)d-m_C(p)(d-1)-\sum_{i=1}^{2d-2}m_C(p_i)\\
&=\deg(C)+\sum_{i=1}^{d-1} \deg(C) - m_C(p)-m_C(p_{2i-1})-m_C(p_{2i})
\end{align*}
If $\deg(C)>\deg(f(C))$, then the sum in the last line is negative, 
which implies that there exists $j\in\{1,\dots,2d-2\}$ such that 
\[m_C(p)+m_C(p_{2j-1})+m_C(p_{2j})>\deg(C)\]
The same reasoning holds with $\geq$ instead of $>$.
\end{remark}

\begin{lemma}\label{lem:nontrivial1}
Any quadratic map in $\Hl$ has a real and a pair of non-real base-points. 
In particular, the map $\tau\colon [x:y:z]\dashmapsto[yz:xz:xy]$ is contained in $\J_*\setminus\Hl$, and so $\Hl\subsetneq\Gls$. 
\end{lemma}
\begin{proof}
Let $f\in\Hl$ be a quadratic map. 
We write $f=f_n\cdots f_1$, where $f_i=\alpha_ig_i\beta_i$ with $\alpha_i,\beta_i\in\Autp$ and $g_i\in\Jls$ of degree $\deg(g_i)>1$ with exactly one real base-point and all other base-points non-real points; we can do this because $\Hl$ is generated by $\Autp$ and $\sigma$, and in a first step, we can take $g_i=\sigma$ for all $i$.
For $i=1,\dots,n$, we denote by $\Lambda_i$ the linear system of the map $(f_i\cdots f_1)^{-1}$, and 
\[D:=\max\{\deg(\Lambda_i)\mid i=1,\dots,n\},\quad N:=\max\{i\mid \deg(\Lambda_i)=D\mid i=1,\dots,n\}.\] 
We now do induction on the lexicographically ordered pair $(D,N)$. 
Note that $D\geq2$, since $\deg(f)=2$, and that $f_{i+1}f_i$ has at most two real base-points for any $i=1,\dots,n-1$.

If $(D,N)=(2,1)$, then $f=f_1=\alpha_1\sigma\beta_1$ and we are done. Suppose that $(D,N)>(2,1)$. We will write $f_{N+1}f_N=\tau_m\cdots\tau_1$, where $\tau_i=\alpha_ig_i'\beta_i$ with $\alpha_i,\beta_i\in\Autp$ and $g_i'\in\Jls$ of degree $\deg(g_i')>1$ with exactly one real base-point and all other base-points  non-real points, and such that the pair $(D',N')$ associated to the sequence $f_n\cdots f_{N+2}\tau_m\cdots\tau_1f_{N-1}\cdots f_1$ is strictly smaller than $(D,N)$. 

If $D=2$, then $(D,N)=(2,n)$ and so $f_2f_1$ is of degree $\leq2$. If $f_2f_1$ is linear, we replace $f_3f_2f_1$ in the composition by $\tau:=f_3f_2f_1$. The sequence $f_n\cdots f_4\tau$ has pair $(D',N')=(2,n-2)$. If $\tau:=f_2f_1$ is of degree $2$, it has one real and two non-real conjugate base-points, and the sequence $f_n\cdots f_3\tau$ has associated pair $(D',N')=(2,n-1)$. 

Suppose that $D>2$. We denote by $m(t)$ the multiplicity of $\Lambda_N$ in a point $t$. Let $q_1$ (resp. $q_2$) be the real base-point of $f_N^{-1}$ (resp. $f_{N+1}$).

{\bf(a)} If $q_1=q_2$, then then $(\beta_{N+1}\alpha_N)\in\Jls$, and so $g_{N+1}\beta_{N+1}\alpha_Ng_N\in\Jls$ and the map $\tau:=f_{N+1}f_N=\alpha_{N+1}(g_{N+1}\beta_{N+1}\alpha_Ng_N)\beta_N$ has exactly real base-point, namely the real base-point one of $f_N$, and all its other base-points are non-real points. The sequence $f_n\cdots f_{N+2}\tau f_{N-1}\cdots f_1$ has associated pair $(D',N')<(D,N)$. 

{\bf(b)} Suppose that $q_1\neq q_2$. 
By Remark~\ref{rmk:deg} applied to a general member of the linear system $\Lambda_N$, there exist base-points $r_1,s_1$ (resp. $r_2,s_2$) of $f_N^{-1}$ (resp. $f_{N+1}$) such that
\begin{equation}\label{eq}
D\leq m(q_1)+m(r_1)+m(s_1),\quad D<m(q_2)+m(r_2)+m(s_2).
\end{equation}
For $i=1,2$, we can assume that $m(r_i)\geq m(s_i)$ and that $r_i$ is a point in $\p^2$ or is infinitely near $q_i$.

{\bf(b1)} Suppose that $m(q_1)\geq m(q_2)$. We first show that $r_2$ is a point in $\p^2$. If $r_2$ is infinitely near $q_2$, then $m(q_2)\geq m(r_1)+m(\bar{r}_1)=2m(r_1)\geq 2m(s_1)$. We obtain from inequalities (\ref{eq}) that $D<m(q_1)+2\cdot\frac{m(q_1)}{2}=2m(q_1)$, which is impossible. 
So, $r_2$ is a point in $\p^2$. 
From inequalities (\ref{eq}) we obtain that 
\[D<m(q_2)+2m(r_2)\leq m(q_1)+2m(r_2).\]
It follows that the triples $q_1,r_2,\bar{r}_2$ and $q_2,r_2,\bar{r}_2$ are not collinear. Thus, there exist quadratic maps $\rho_1,\rho_2\in\Birp$ with base-points $q_1,r_2,\bar{r}_2$ and $q_2,r_2,\bar{r}_2$, respectively. We have
\[\deg(\rho_i f_N\cdots f_1)=2D-m(q_i)-2m(r_2)<D,\quad i=1,2\]
and we can write $\tau_1:=\rho_1f_N=\gamma_1 g\delta_1$ and $\tau_3:=f_{N+1}\rho_2^{-1}=\gamma_2 g'\delta_2$  for some $\gamma_1,\gamma_2,\delta_1,\delta_2\in\Autp$ and $g,g'\in\J_*$ with only one real base-point and all other base-points real points. Furthermore, $\tau_2:=\rho_2\rho_1^{-1}$ is a quadratic map with a real and a pair of non-real conjugate base-points, so we can write $\tau_2=\gamma_3\sigma\delta_3$ for some $\gamma_3,\delta_3\in\Autp$. 
The situation is summarised in the following commutative diagram, where $\tilde{\Lambda}_i$ is the linear system of $(\rho_if_N\cdots f_1)^{-1}$, which is of degree  $\deg(\tilde{\Lambda}_i)<D$, $i=1,2$. 
\[
\begin{tikzpicture}[baseline= (a).base]
\node[scale=1](a) at (0,0){
\begin{tikzcd}[cramped,sep=small]
&\Lambda_N\ar[d,"\rho_1",dashed,swap]\ar[dr,"\rho_2",dashed,swap]\ar[drr,"f_{N+1}",dashed]&&\\
\Lambda_{N-1}\ar[ru,"f_N",dashed]\ar[r,dashed,swap,"\tau_1"]&\tilde{\Lambda}_1\ar[r,dashed,swap,"\tau_2"]&\tilde{\Lambda}_2\ar[r,dashed,swap,"\tau_3"]&\Lambda_{N+1}
\end{tikzcd}
};
\end{tikzpicture}
\]
The sequence $f_m\cdots f_{N+2}\tau_3\tau_2\tau_1f_{N-1}\cdots f_1$ has associated pair $(D',N')<(D,N)$. 

{\bf(b2)} If $m(q_2)>m(q_1)$ we proceed analogously to the case (b1) with $r_1$ instead of $r_2$. 
\end{proof}

\begin{lemma}\label{lem:Gs_uncountable_index}
The group $\Glo$ has uncountable index in $\Birp$.
\end{lemma}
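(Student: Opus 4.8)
The plan is to exhibit an explicit uncountable family $(\kappa_t)_{t\in\R}$ of elements of $\Birp$ lying in pairwise distinct left cosets of $\Glo$; since $[\Birp:\Glo]$ is the cardinality of $\Birp/\Glo$, this gives the claim. The point is to feed Lemma~\ref{lem:nontrivial1} into a coset computation: as no quadratic map with three real base-points lies in $\Glo$, it suffices to produce quadratic maps $\kappa_t$ with three real base-points such that each product $\kappa_t^{-1}\kappa_s$ with $t\neq s$ is again a quadratic map with three real base-points in general position, hence again outside $\Glo$. (Equivalently, this shows that $[\Gls:\Hl]$ is uncountable, which is the content needed on the $\Gls$-side.)

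Concretely, I would fix the standard quadratic involution $\kappa\colon[x:y:z]\dashmapsto[yz:xz:xy]$ and, for $t\in\R$, set $\beta_t=\begin{pmatrix}1&0&t\\0&1&t\\0&0&1\end{pmatrix}\in\Autp$ and $\kappa_t:=\beta_t\kappa\beta_t^{-1}$. Then each $\kappa_t$ is a quadratic involution whose base-points are $[1:0:0]$, $[0:1:0]$ and $\beta_t([0:0:1])=[t:t:1]$, three real points in general position, so $\kappa_t$ is a quadratic map with three real base-points (in particular $\kappa_t\in\Gls$). For $t\neq s$ I would rewrite $\kappa_t^{-1}\kappa_s=\kappa_t\kappa_s=\beta_t(\kappa\gamma\kappa)\beta_s^{-1}$ with $\gamma=\beta_t^{-1}\beta_s=\begin{pmatrix}1&0&m\\0&1&m\\0&0&1\end{pmatrix}$ and $m=s-t\neq0$, and compute directly $\kappa\gamma\kappa\colon[x:y:z]\dashmapsto[x(z+my):y(z+mx):(z+mx)(z+my)]$, which is a quadratic map with the three real base-points $[1:0:0]$, $[0:1:0]$, $[1:1:-m]$ in general position. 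Pre- and post-composing with the automorphisms $\beta_s^{-1}$ and $\beta_t$ preserves the degree, the reality and the general position of the base-points, so $\kappa_t\kappa_s$ is again a quadratic map with three real base-points.

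By Lemma~\ref{lem:nontrivial1} this forces $\kappa_t^{-1}\kappa_s=\kappa_t\kappa_s\notin\Glo$ for all $t\neq s$, that is $\kappa_t\Glo\neq\kappa_s\Glo$. Since $\R$ is uncountable, the family $(\kappa_t\Glo)_{t\in\R}$ consists of uncountably many distinct cosets, and the proof is complete.

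The only step I expect to require genuine care is the middle computation: that $\kappa\gamma\kappa$ really drops to degree $2$ and has three \emph{proper} real base-points in general position. The drop to degree $2$ reflects the classical fact that the composite of two quadratic transformations sharing exactly two proper base-points is again quadratic, so one may also bypass the explicit formula and invoke the composition calculus for quadratic maps, checking reality and general position of the resulting base-points afterwards. The delicate part is to guarantee that no base-point becomes infinitely near — which is why I vary \emph{both} affine coordinates of the moving base-point, taking $[t:t:1]$ rather than, say, $[1:t:1]$; for the latter the product acquires an infinitely near base-point, and Lemma~\ref{lem:nontrivial1} (which concerns three honest base-points in $\p^2$) would no longer apply.
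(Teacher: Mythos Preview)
Your proof is correct and follows essentially the same route as the paper: there one considers the map $A\to\Birp/\Glo$, $\alpha\mapsto(\alpha\sigma)\Glo$, where $A$ is the group of translations $[x:y:z]\mapsto[x+az:y+bz:z]$ and $\sigma$ is the standard quadratic involution, and proves injectivity by noting that $\sigma\alpha^{-1}\beta\sigma$ is quadratic with three real base-points whenever $\alpha\neq\beta$, then invoking Lemma~\ref{lem:nontrivial1}. Your diagonal one-parameter family $\beta_t$ and the explicit formula for $\kappa\gamma\kappa$ amount to a more hands-on version of exactly this computation; your caution about infinitely near base-points is in fact well placed, since in the paper's two-parameter family one should strictly speaking exclude the axes $a=0$ and $b=0$.
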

\begin{proof}
Consider the map $\tau\colon[x:y:z]\dashmapsto[yz:xz:xy]$ and define the group
\[A:=\{[x:y:z]\mapsto[x+az:y+bz:z]\mid a,b\in\R\}\subset\Autp\]
Consider the map between sets $\psi\colon A\longrightarrow \Bir_\R(\p^2)/\Glo$, $\alpha\mapsto (\alpha\tau) \Glo$
We now prove that it is injective, which will yield the claim.
For all $\alpha\in A$ the map $\tau\alpha\tau$ is of degree $\leq2$, and $\tau\alpha\tau\in\Autp$ if and only if $\alpha=\Id$. If $\tau\alpha\tau$ is of degree $2$, it has three real base-points. 
Let $\beta,\gamma\in A$ such that $(\beta\tau)\Glo=(\gamma\tau)\Glo$. Then $\tau(\beta^{-1}\gamma)\tau\in\Glo$, and in particular $\tau(\beta^{-1}\gamma)\tau\in\Gls\cap\Glo=\Hl$, by Lemma~\ref{lem:intersection}.
Lemma~\ref{lem:nontrivial1} implies that $\tau\beta^{-1}\gamma\tau\in\Autp$ and hence $\beta^{-1}\gamma=\id$. It follows that $\varphi$ is injective. 
\end{proof}

\subsection{The group $\Gls$ and a quotient of $\Birp$}

\begin{remark}\label{rmk:std}
A link of type~\IIa of $\Cl_6$ blowing up a pair of non-real conjugate points is conjugate via the birational map $\eta\colon\p^2\rat\Cl_6$ from Proposition~\ref{prop:relative_sarkisov}(\ref{relative_sarkisov:2}) to an element $g\in\Jlo$ of degree $5$ with three pairs of non-real conjugate base-points, not all on one conic.  

Any two non-collinear pairs of non-real conjugate points in $\p^2$ can be sent by an automorphism of $\p^2$ onto $[1:i:0],[1:-i:0],[0:1:i],[0:1:-i]$. So, for any element of $f\in\Birp$ of degree $5$ with three pairs of non-real conjugate base-points not on one conic, there are $\alpha,\beta\in\Autp$ such that $\alpha f\beta\in\Jlo$.  We call $f$ a standard quintic transformation. See \cite[Example]{BM14} or \cite[\S1]{RV05} for equivalent definitions. 
\end{remark}

\begin{lemma}[{\cite[Lemma 3.19]{Z17}}]\label{deflem}
Let $f\in\Jl$, $\eta\colon\p^2\rat\Cl_6$ the birational map from Proposition~\ref{prop:relative_sarkisov}(\ref{relative_sarkisov:2}) and $\eta^{-1} f\eta=\varphi_n\cdots\varphi_1$ a decomposition into links of type~\IIa as in Proposition~\ref{prop:relative_sarkisov}(\ref{relative_sarkisov:2}).
For $j=1,\dots,s$, let $C_j$ be a (real or non-real) fibre of $\pi\colon\Cl_6\to\p^1$ contracted by $\phi_j$ and $\pi(C_j)=[a_j+ib_j:1]$ its image in $\p^1$. We define $v_j=1-\frac{|a_j|}{a_j^2+b_j^2}\in(0,1]$ if $b_j\neq0$, and $v_j=0$ otherwise. Then
\[
\psi\colon\Jlo\to\bigoplus_{(0,1]}\Z/2\Z,\quad f\mapsto\sum_{j=1}^s e_{v_j}
\]
is a surjective homomorphism of groups whose kernel contains all elements of $\Jlo$ of degree $\leq4$.
\end{lemma}

We now reprove \cite[Proposition 5.3]{Z17} by using the principle idea of \cite{LZ17} in the construction of a homomorphism $\Bir(\p^2_k)\to\bigast_I\Z/2\Z$ over a perfect field $k$. 

\begin{proposition}\label{prop:quotient}
The homomorphism $\psi\colon\Jlo\rightarrow\bigoplus_{(0,1]}\Z/2\Z$ lifts to a surjective homomorphism
\[\Psi\colon\Birp\to \bigoplus_{(0,1]}\Z/2\Z\]
whose kernel contains $\Gls$.
\end{proposition}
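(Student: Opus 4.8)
The plan is to obtain $\Psi$ as the restriction to the vertex group $\Birp\subset\Sar$ of a homomorphism of groupoids
\[
\theta\colon\Sar\to\bigoplus_{(0,1]}\Z/2\Z .
\]
By Remark~\ref{def:sar} the groupoid $\Sar$ is generated by Sarkisov links and every relation in it is a product of conjugates of elementary relations. Since the target is abelian, giving $\theta$ amounts to choosing a value $\lambda(\chi)\in\bigoplus_{(0,1]}\Z/2\Z$ for each link $\chi$ (with $\lambda(\chi^{-1})=\lambda(\chi)$, automatic over $\Z/2\Z$) in such a way that the boundary of every elementary disc maps to $0$; conjugates then also map to $0$ because the group is abelian. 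By Proposition~\ref{lem:disc} there are only the six disc types $\Dl_1,\dots,\Dl_6$, so this is a finite verification.

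I would set $\lambda(\chi)=e_v$ with $v=1-|a|/(a^2+b^2)\in(0,1]$ precisely when $\chi$ is a link of type II between two copies of $\Cl_6/\p^1$ contracting a non-real fibre lying over $[a+ib:1]$, exactly as in Definition--Lemma~\ref{deflem}, and $\lambda(\chi)=0$ for every other link. Thus links of type I, III and IV, links of type II between Hirzebruch surfaces, links of type II between del Pezzo surfaces over a point, and type II links $\Cl_6\rat\Cl_6$ contracting a \emph{real} fibre all receive the value $0$.

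Going through Proposition~\ref{lem:disc}, the discs $\Dl_5,\Dl_6$ (built from $\p^2$, the $\F_n$ and blow-ups of real points) and the discs $\Dl_1,\Dl_3$ have every boundary link sent to $0$: their non-identity boundary links are of type I/III, of type II between Hirzebruch surfaces, between del Pezzo surfaces over a point, or $\Cl_6\rat\Cl_6$ contracting a real fibre (in $\Dl_3$ the links through $\Cl_5/\p^1$ come from blowing up a real point, by Example~\ref{ex:conicbundle'}). The only discs whose boundary carries non-zero values are $\Dl_2$ and $\Dl_4$. The boundary of $\Dl_2$ is a relation taking place entirely inside the conic bundle $\Cl_6/\p^1$, and is one of the elementary relations already used in Definition--Lemma~\ref{deflem} to prove that $\psi$ is well defined, so $\theta$ annihilates it. In $\Dl_4$ the boundary alternates type I links $\Ql\rat\Cl_6/\p^1$ (value $0$) with two non-real type II links $\Cl_6\rat\Cl_6$ which, being exchanged by the symmetry of the disc, contract fibres of equal invariant $v$; their contributions $e_v+e_v$ therefore cancel over $\Z/2\Z$. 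The hard part will be exactly this last step, confirming that the boundaries of $\Dl_2$ and $\Dl_4$ die: this is where the $\Z/2\Z$-coefficients and the precise form of the invariant $v$ are needed, and where the disc relations must be matched against the relations already known for $\psi$.

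Granting a well-defined $\theta$, I put $\Psi:=\theta|_{\Birp}$. An automorphism of $\p^2$ is an isomorphism of fibrations and involves no link, so $\Psi|_{\Autp}=0$; and by Proposition~\ref{prop:relative_sarkisov}(1) every element of $\Jls$ is conjugate to a composition of automorphisms of a Hirzebruch surface and of type II links between Hirzebruch surfaces, all of value $0$. As conjugating by a fixed birational map $c$ contributes $\theta(c)+\theta(c^{-1})=2\theta(c)=0$, this yields $\Psi|_{\Jls}=0$ and hence $\Gls\subseteq\ker\Psi$. Dually, for $f\in\Jlo$ conjugate to $\hat f\colon\Cl_6\rat\Cl_6$ with decomposition $\hat f=\phi_s\cdots\phi_1$ into type II links, the same cancellation of the conjugator gives $\Psi(f)=\theta(\hat f)=\sum_j\lambda(\phi_j)=\sum_j e_{v_j}=\psi(f)$, so $\Psi$ lifts $\psi$. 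Finally $\psi$ is surjective and $\Jlo\subseteq\Birp$, whence $\Psi$ is surjective, which finishes the proof.
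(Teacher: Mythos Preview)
Your strategy---build a groupoid homomorphism $\Sar\to\bigoplus_{(0,1]}\Z/2\Z$ by assigning values to links and checking the six elementary discs---is exactly the paper's. The difference is in how the value on a type~II link $\chi\colon\Cl_6/\p^1\rat\Cl_6/\p^1$ is defined, and this difference matters for the one step you yourself flag as hard.

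Your formula $v=1-|a|/(a^2+b^2)$ presupposes coordinates on the base $\p^1$; it is not intrinsic to an abstract link in $\Sar$. The paper avoids this by fixing once and for all the links $\tau_1\colon\p^2\rat\Ql$ and $\tau_2\colon\Ql\rat\Cl_6$ (blowing up the four reference points) and setting $\Psi(\chi):=\psi(\tau_1^{-1}\tau_2^{-1}\chi\,\tau_2\tau_1)$, which lands in $\Jlo$ where $\psi$ is already defined. This single device also dispatches all three discs $\Dl_2,\Dl_3,\Dl_4$ at once: their boundaries, after attaching $\tau_1$ or $\tau_2\tau_1$, become relations \emph{inside} the group $\Jlo$, and $\psi$ is a homomorphism there, so they die.

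Your treatment of $\Dl_4$ has a genuine gap. The ``symmetry of the disc'' you invoke would be an automorphism of the centre $\Cl_4$ exchanging its two conic bundle structures, and a generic del Pezzo surface of degree~$4$ with this real structure has no such automorphism. The two invariants attached to the two type~II links on $\partial\Dl_4$ \emph{are} equal, but not for a symmetry reason: they are equal precisely because the boundary relation conjugates into $\Jlo$ and $\psi$ annihilates it. So the fix is to replace the symmetry argument by the conjugation-into-$\Jlo$ argument you already used for $\Dl_2$; once you do that, the well-definedness of $\lambda$ and the disc checks collapse into the single fact that $\psi$ is a homomorphism on $\Jlo$, which is the content of Definition--Lemma~\ref{deflem}.
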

\begin{proof}
We denote by $\Sar$ the set of birational transformations between rank $1$ fibrations. It is a groupoid and contains $\Birp$ as subgroupoid, so it suffices to construct a homomorphism of groupoids 
\[\Psi\colon\Sar\to\bigoplus_{(0,1]}\Z/2\Z\]
whose restriction to its subgroup $\Jlo$ is $\psi$ and whose kernel contains $\Gls$. 

Let $\phi\colon\Cl_6\rat\Cl_6$ be a link of type~\IIa over $\p^1$ blowing up a pair of non-real conjugate points. Let $\eta_1\colon\p^2\rat\Ql$ and $\eta_2\colon\Ql\rat\Cl_6$ be the links from Proposition~\ref{prop:relative_sarkisov}(\ref{relative_sarkisov:2}) and $\eta=\eta_2\eta_1$ 
Then $\eta^{-1}\phi\eta\in\Jlo$ is a standard quintic transformation and we define $\Psi(\phi):=\psi(\eta^{-1}\phi\eta)$. 
For any other link $\phi\in\Sar$ and any isomorphism $\phi\in\Sar$ we define $\Psi(\phi):=0$. 
	To show that $\Psi$ is a homomorphism of groupoids, it remains to check that any relation between links and isomorphisms in $\Sar$ is sent onto zero. Since $\bigoplus\Z/2\Z$ is abelian, it suffices by Theorem~\ref{thm:sarkisov}(\ref{sarkisov2}) to check that elementary relations in $\Sar$ are sent onto zero by $\Psi$. 
Let $\phi_n\cdots\phi_1=\id$ be an elementary relation in $\Sar$. We can assume that one of the $\phi_i$ is a link of type~\IIa of $\Cl_6$ over $\p^1$ with a pair of non-real conjugate base-points.  
	The elementary relation $\phi_n\cdots\phi_1=\id$ corresponds to the boundary of an elementary disc in $\Xl$, and it is of type ~\II or type~\IV by Proposition~\ref{lem:disc} because one of the $\phi_i$ is a link of type~\IIa of $\Cl_6$ over $\p^1$ with a pair of non-real base-points. 
	
	If the disc is of type~\II, then $n=4$, $\eta^{-1}\varphi_i\eta\in\Jl$, $i=1,\dots,4$, and hence 
	\[\Psi(\varphi_4)\cdots\Psi(\varphi_1)=\psi(\eta^{-1}\varphi_4\eta)\cdots\psi(\eta^{-1}\varphi_1\eta)=\psi(\eta^{-1}\varphi_1\cdots\varphi_4\eta)=0.\] 
	
	If the disc is of type~\IV, we can assume up to conjugation that $\varphi_1,\varphi_3^{-1},\varphi_4,\varphi_6^{-1}\colon\Ql\rat\Cl_6$ are the links of type~\Ia in the relation. Up to automorphisms of $\Ql$ (which are sent onto zero by $\Psi$), we can furthermore assume that $\varphi_1=\varphi_3^{-1}=\eta_2$. 
	Then $\eta_1^{-1}\varphi_3\varphi_2\varphi_1\eta_1=\eta^{-1}\varphi_2\eta\in\Jlo$, and hence also $\eta_1^{-1}\varphi_6\varphi_5\varphi_4\eta_1\in\Jlo$. We obtain that 
	\[\Psi(\varphi_6)\cdots\Psi(\varphi_1)=\Psi(\varphi_5)\Psi(\varphi_2)=\psi(\eta_1^{-1}\varphi_6\varphi_5\varphi_4\eta_1)\psi(\eta^{-1}\varphi_2\eta)=0.\]
	This shows that $\Psi$ is a homomorphism of groupoids. By definition it coincides with $\psi$ on $\Jlo$, and its kernel contains $\Gls$ by Proposition~\ref{prop:relative_sarkisov}(\ref{relative_sarkisov:1}). 
\end{proof}

\begin{corollary}\label{cor:nontrivial2}
The group $\Gls$ does not contain any standard quintic transformations. In particular, $\Hl\subsetneq\Glo$
and the index of $\Gls$ in $\Birp$ is uncountable. 
\end{corollary}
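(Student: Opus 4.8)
The plan is to mirror the structure of Lemma~\ref{lem:nontrivial1} and Lemma~\ref{lem:Gs_uncountable_index}, exploiting the symmetry between the roles played by $\Glo$ and $\Gls$ and the standard quintic transformations. First I would prove the parenthetical statement that $\Gls$ contains no standard quintic transformation. By Remark~\ref{rmk:std}, any standard quintic transformation $f$ lies in $\Glo$; if in addition $f\in\Gls$, then $f\in\Glo\cap\Gls=\Hl$ by Lemma~\ref{lem:intersection}(\ref{inters5}), so $f$ decomposes into links $\p^2\dashrightarrow\Ql$ and $\Ql\dashrightarrow\p^2$ of type II (Remark~\ref{rmk:H}(\ref{H:2})). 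On the other hand, $f$ is conjugate into $\Jlo$ and hence admits a decomposition into links of type II of the conic bundle $\Cl_6/\p^1$, corresponding to a path along boundaries of elementary discs of type $\Dl_2$, $\Dl_3$, $\Dl_4$ inside $\Xlo$.

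I would then run the same disc-chasing argument as in Lemma~\ref{lem:nontrivial1}, but with the roles of $\ast$ and $\circ$ interchanged. The relation equating the two decompositions produces a loop $\gamma$ bounding a disc $D\subset\Xl$ which, by Theorem~\ref{thm:sarkisov}(\ref{sarkisov2}), is a finite union of elementary discs. After arranging $\gamma(\tfrac12)=(\p^2,f^{-1})$ with $\gamma([0,\tfrac12])\subset\Xlo\cap\Xls$ covered by discs of type $\Dl_1$ (Lemma~\ref{lem:connected_components}(\ref{cc6})) and $\gamma([\tfrac12,1])\subset\Xlo$, I would use the intersection pattern of the discs $\Dl_1,\Dl_2,\Dl_3,\Dl_4$ (via Lemma~\ref{rmk:intersection_elt_discs} and Proposition~\ref{lem:disc}) to extract a connected subunion of discs giving a path from $(\p^2,\id)$ to $(\p^2,f^{-1})$ whose boundary links realise $f^{-1}$ as an element of $\Hl$. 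The obstruction, exactly as before, is that a standard quintic transformation cannot be so decomposed: its image under the homomorphism $\Psi$ of Proposition~\ref{prop:quotient} is a nonzero generator of a $\Z/2\Z$ (Remark~\ref{rmk:std}), whereas $\Gls\subset\ker(\Psi)$, giving the contradiction. In fact this $\Psi$-argument alone already proves $f\notin\Gls$ directly: if $f$ were in $\Gls$ then $\Psi(f)=0$, contradicting that $\Psi(f)=\psi(g)\neq0$ for a standard quintic $f$ conjugate to $g\in\Jlo$.

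The main obstacle will be the uncountability of the index, and here I would adapt Lemma~\ref{lem:Gs_uncountable_index}. The cleanest route is to produce an uncountable family of standard quintic transformations whose images in $\Birp/\Gls$ are pairwise distinct, using the quotient $\Psi$ as the separating invariant. Concretely, since $\psi\colon\Jlo\to\bigoplus_{(0,1]}\Z/2\Z$ is surjective onto an uncountable group and each standard quintic contributes a single generator $e_v$ with $v\in(0,1]$ depending on the base-points (Definition--Lemma~\ref{deflem}, Remark~\ref{rmk:std}), I can choose for uncountably many values $v\in(0,1]$ a standard quintic $f_v$ with $\Psi(f_v)=e_v$. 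If $f_v\,\Gls=f_{v'}\,\Gls$ then $f_{v'}^{-1}f_v\in\Gls\subset\ker(\Psi)$, whence $e_v=e_{v'}$ in $\bigoplus_{(0,1]}\Z/2\Z$ and so $v=v'$. Thus $v\mapsto f_v\,\Gls$ is injective from an uncountable set into $\Birp/\Gls$, proving that $\Gls$ has uncountable index. This completes the proof and, together with Lemma~\ref{lem:Gs_uncountable_index}, establishes the uncountable-index assertions for both $\Glo$ and $\Gls$ claimed in Theorem~\ref{main thm}.
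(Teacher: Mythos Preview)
Your proposal is correct, and the essential argument coincides with the paper's: use that $\Psi(f)\neq 0$ for a standard quintic $f$ while $\Gls\subset\ker\Psi$, and then exploit $\Psi$ to separate cosets of $\Gls$. Two remarks on streamlining. First, the disc-chasing paragraph mirroring Lemma~\ref{lem:nontrivial1} is superfluous, as you yourself observe at the end of it; the paper dispenses with it entirely and goes straight to the $\Psi$-argument, which is all that is needed (and the combinatorics of $\Dl_2,\Dl_3,\Dl_4$ would in any case not yield a clean analogue of the $\Dl_5/\Dl_6$ separation without eventually invoking $\Psi$). Second, for the uncountable index the paper uses the one-line observation that, since $\Gls\subset\ker\Psi$, the surjection $\Psi$ factors through a surjection $\Birp/\Gls\to\bigoplus_{(0,1]}\Z/2\Z$, whose target is uncountable; your explicit family $\{f_v\}$ achieves the same conclusion but is more work than necessary.
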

\begin{proof}
Let $\Psi\colon\Birp\to\bigoplus_{(0,1]}\Z/2\Z$ be the homomorphism from Proposition~\ref{prop:quotient}. Its kernel contains $\Gls$ and hence also $\Hl$. For any standard quintic transformation $f\in\Glo$, we have $\Psi(f)\neq0$ by Remark~\ref{rmk:std}, Lemma~\ref{deflem} and Proposition~\ref{prop:quotient}. It follows that $f\notin\Gls$. 
Moreover, $\Psi$ induces a surjective map $\Birp/\Gls\to\bigoplus_{(0,1]}\Z/2\Z$ and hence the quotient $\Birp/\Gls$ is uncountable. 
\end{proof}

\section{Proofs of the main results}

\begin{proof}[Proof of Theorem~\ref{main thm}]
The group $\Birp$ is generated by the groups $\Gls$ and $\Glo$ by \cite[Theorem 1.1]{BM14}. To show that $\Birp$ is isomorphic to the amalgamated product $\Gls\bigast_{\Gls\cap\Glo}\Glo$, it suffices to show that any relation in $\Birp$ is the composition of conjugates of relations in $\Gls$ and relations in $\Glo$. 
By Theorem~\ref{thm:sarkisov}(\ref{sarkisov2}), any relation in $\Birp$ is generated by conjugates of elementary relations of links. 
An elementary relation is the boundary of an elementary disc, which are of \mbox{type~\I, \dots, \VI} by Proposition~\ref{lem:disc}. The boundary of a disc of type~\II, \III and \IV is conjugate a relation in $\Glo$,  the boundary of a disc of type~\V and \VI are conjugate to relations in $\Gls$, and the boundary of a discs of type~\I are conjugate to relations in $\Hl$ by Lemma~\ref{lem:inters_path}. 
We have $\Gls\cap\Glo=\Hl$ by Lemma~\ref{lem:intersection} and it is a proper subgroup of $\Glo$ and $\Gls$ by Lemma~\ref{lem:nontrivial1} and Corollary~\ref{cor:nontrivial2}. 
The groups $\Gls$ and $\Glo$ have uncountable index in $\Birp$ by Corollary~\ref{cor:nontrivial2}.
\end{proof}

\begin{proof}[Theorem~\ref{thm:abel}]
The homomorphism $\Psi\colon\Birp\rightarrow\bigoplus_{(0,1]}\Z/2\Z$ from Proposition~\ref{prop:quotient} coincides with the one given in \cite[Proposition 5.3]{Z17} since their restriction to the generating set $\Autp\cup\Jlo\cup\Jls$ of $\Birp$ coincide. The kernel of $\Psi$ is computed in \cite[\S6]{Z17} by using \cite[\S2--3]{Z17} and is equal to $[\Birp,\Birp]$ and to the normal subgroup generated by $\Autp$.
\end{proof}

\begin{proof}[Proof of Corollary~\ref{cor:tree}]
By Theorem~\ref{main thm}, the group $\Birp$ acts on the Bass-Serre tree $T$ of the amalgamated product $\Gls\ast_{\Gls\cap \Glo}\Glo$. Every element of $\Birp$ of finite order has a fixed point on $T$. It follows that every finite subgroup of $\Birp$ has a fixed point on $T$ \cite[\S I.6.5, Corollary 3]{S80}, and is in particular conjugate to a subgroup of $\Gls$ or of $\Glo$. 
For infinite algebraic subgroups of $\Birp$, it suffices to check the claim for the maximal algebraic subgroups of $\Birp$. 
By \cite[Theorem 1.1]{RZ16}, for any infinite maximal algebraic subgroup $G$ of $\Birp$ there is a birational map $\theta\colon\p^2\rat X$, where $X$ is one of the surfaces in the list below and $G=\theta^{-1}\Aut(X)\theta$:
\begin{enumerate}
\item $X=\p^2$, 
\item $X=\Ql$, 
\item $X=\FF_n$, $n\neq1$, 
\item $X$ is a del Pezzo surface of degree $6$ with a birational morphism $X\to \F_0$ blowing-up a pair of non-real conjugate points.
\item $X$ is a del Pezzo surface of degree $6$ with a birational morphism $X\to \F_0$ blowing-up two real points on $\FF_0$, 
\item There is a birational morphism $X\to \Cl_6$ over $\p^1$ of conic bundles blowing up $n\geq1$ pairs of non-real conjugate points on non-real fibres on the pair of disjoint non-real conjugate $(-1)$-curves of $\Cl_6$ (the exceptional divisors of the contraction $\Cl_6\to\Ql$),
\item There is a birational morphism $X\to \FF_n$ of conic bundles blowing up $2n\geq4$ points on the zero section of self-intersection $n$. 
\end{enumerate}
(1)\&(2)\&(3) We have $\Autp\subset \Hl=\Gls\cap\Glo$. The group $\Aut_\R(\Ql)$ is conjugate to a subgroup of $\Glo$, and for $n\geq0$, the group $\Aut_\R(\F_n)$ is conjugate to a subgroup of $\Gls$.

(4) The surface $X$ contains exactly three pairs of non-real conjugate disjoint $(-1)$-curves. The group $\Aut_\R(X)$ is generated by the lift of a subgroup of $\Aut_\R(\F_0)$ and two elements descending to birational maps of $\F_0$ preserving one of the two fibrations $\F_0/\p^1$ \cite[Proposition 3.5(2)\&(3)]{RZ16}. So, $\Aut_\R(X)$ is conjugate to a subgroup of $\Gls$. 

(5) The surface $X$ contains exactly six real $(-1)$-curves. Via the blow-down $\eta\colon X\rightarrow\p^2$ of three disjoint $(-1)$-curves, the group $\Aut_\R(X)$ is conjugate to a subgroup of $\Gls$.

(6) The group $\Aut_\R(X)$ is generated by the lift of a subgroup of $\Aut_\R(\Ql)$ and by elements descending to birational maps of $\Cl_6$ over $\p^1$ \cite[Propositio 4.5(1)\&(2)]{RZ16}. So, $\Aut_\R(X)$ is conjugate to a subgroup of $\Glo$.

(7) The group $\Aut_\R(X)$ is generated by the lift of a subgroups of $\Aut_\R(\F_n)$  and by elements descending to birational maps of $\F_n$ \cite[Proposition 4.8(1)\&(2)]{RZ16}. So, $\Aut_\R(X)$ is conjugate to a subgroup of $\Gls$.
\end{proof}

\bibliographystyle{abbrv}
\bibliography{biblio}

\end{document}